    \newtheorem{theorem}{Theorem}
    \newtheorem{lemma}{Lemma}
    \newtheorem{proposition}{Proposition}
\newtheorem{definition}{Definition}
    \newcounter{example}
\title{Worst-Case Analysis for a Leader-follower Partially Observable Stochastic Game}
\author{Yanling Chang\\
Department of Engineering Technology \& Industrial Distribution \\
 Department of Industrial Systems and Engineering,\\
  Texas A\&M University, College Station, TX 77843\\
   yanling.chang@tamu.edu\\
   Chelsea C. White III\\
	H. Milton Stewart School of Industrial \& Systems Engineering,\\ Georgia Institute of Technology, Atlanta, GA 30318,\\
	cw196@gatech.edu}
\date{}
\begin{document}
\maketitle
\doublespacing
\begin{abstract}

Partially observable stochastic games provide a rich mathematical paradigm for modeling multi-agent dynamic decision making under uncertainty and partial information. However, they generally do not admit closed-form solutions and are notoriously difficult to solve. Also, in reality, each agent often does not have complete knowledge of the other agent. This paper studies a leader-follower partially observable stochastic game where the leader has little knowledge of the adversarial follower's reward structure, level of rationality, and process for gathering and transmitting data relevant for decision making. We introduce the worst-case analysis to the partially observable stochastic game to cope with this lack of knowledge and determine the best worst-case value function of the leader. The resulting problem from the leader's perspective has a simple sufficient statistic; however, different from a classical partially observable Markov decision process, the value function of the resulting problem may not be convex. We design a viable and computationally attractive solution procedure for computing a lower bound of the leader's value function as well as its associated control policy in the finite planning horizon. We illustrate the use of the proposed approach in a liquid egg production security problem.
\end{abstract}
Keywords: worst-case analysis; partially observable Markov decision process; partially observable stochastic game.

\section{Introduction}
Stochastic games are classical dynamic game models, where the state of the system evolves on the basis of the current state and actions taken by all agents. However, these models do not consider the fact that in reality, an agent in a multi-agent scenario is likely to have only partial and noise-corrupted data about other agents. Partially observable stochastic games (POSGs) generalize stochastic games by taking into consideration this fact, and provide a rich normative framework for multiple intelligent agents with distinct objectives to dynamically control the operation of a system under uncertainty and partial information. Unfortunately, while game theorists have heavily studied Bayesian games and stochastic games, the literature on POSGs is relatively sparse. POSGs generally do not admit closed-form solutions and suffer from significant computational challenges. Tractable algorithms for computing control policies for general-sum POSGs are still rare, and both the Artificial Intelligence (AI) and Operations Research (OR) communities have focused on POSGs with special structures including zero-sum POSGs (Ghosh et al. 2004; Saha 2014) and common-payoff POSGs (Seuken and Zilberstein 2005; Oliehoek and Amato 2016) over the past decades.   

A leader-follower POSG introduced in Chang et al. (2015a, 2015b) is a new general-sum POSG, where each agent knows its own state but only has possibly inaccurate/incomplete observation of the other agent's state. At the beginning of the game, the leader selects its policy first, and then the follower determines its best response policy, with the complete knowledge of the leader's policy (e.g., adversaries spend time in learning defender's policies (\textit{Information Operations}, 2014)). At each stage, the selected policy pair determines actions for each agent simultaneously, and then the system transitions to a new state and each agent receives a new observation. Each agent's policy selects action to achieve its objective, based on the entire history of its own current and past observations, states, and actions.  

The (single-period) leader-follower game, also called the Stackelberg game, has wide applications and has been successfully implemented in many real security problems. A main focus of these applications is to provide decision support to the defender (commonly modeled as the leader) who is protecting a set of critical nodes against adversaries. Such examples include the placement of checkpoints and canine units at Los Angeles International Airport (Jain et al. 2010) and the scheduling of patrols in the Port of Boston (An et al. 2014). However, in the presence of intelligent agents who can respond and adjust their actions in ever-changing environments over time, many of these problems also have a dynamic nature that cannot be fully addressed by single-period games (e.g., an adversary may choose another target if the current target is well protected). The leader-follower POSG is able to explicitly model the dynamic interaction between agents and determine dynamic defense policies for the leader that promptly adjust defensive resource allocation over time for protection, given all available real-time data. This development is consistent with the emerging ``Moving Target Defense" belief that nowadays it is impossible to attain perfect security and the aim should be to develop dynamic systems that are defensible rather than perfectly secure (Department of Homeland Security 2015).

Game-theoretic approaches commonly assume that each agent has complete knowledge of both agents' reward structures, dynamics, and data sensing and transmission systems and that each agent's objective is to maximize its expected reward criterion. However, these assumptions may not be realistic in many scenarios (Camerer 2011). The intent of an adversary can span a wide range of possibly unknown issues (Bier et al. 2007), perfect rationality is often an unlikely human behavior (March 1978), and action selection may be affected by a variety of issues, such as task complexity, the interplay between emotion and cognition, etc. (Conlisk 1996).

In this paper, we consider a leader-follower general-sum POSG where: (i) the objective of the adversarial follower is unknown to the leader; (ii) at each decision epoch, it is unclear what information (or observations) that the follower has collected and how the follower will make its decision; (iii) the follower can be irrational; and (iv) the state of each agent cannot be precisely observed by the other agent. This is realistic in many situations, for example, when the leader is facing with a new unknown adversary. The intent of this research is to determine the best worst-case value function and the corresponding control strategy for the leader under these circumstances.   

The worst-case analysis is a popular approach in single-period games to cope with the uncertainty of an agent's behavior towards others. For example, Gilboa and Schmeidler (1989), Lo(1996), and Marinacci(2000) examined normal form games where an agent's action is not exactly known by the other agent. Aghassi and Bertsimas(2006) and Yolmeh and Baykal-Gursoy(2017) used this approach to contend with the payoff uncertainty. Simchi-Levi and Wei(2015) and Caprara et~al.(2016) also determined performance benchmarks in one-shot security planning via the same approach. The benchmarks from the worst-case analysis were further used to reveal the value of improved understanding of the behavior of adversaries in single-period security applications (Nguyen et al. 2013). Kardes et~al.(2011) introduced robust optimization to stochastic games where reward structure and/or transition probabilities are uncertain. However, the related analysis has not been examined for general-sum POSGs. This research introduces the worst-case analysis to the leader-follower POSG to reduce this gap in the literature.

Contributions of this paper are summarized as follows. 
\begin{enumerate}[(i)]
	\item  We introduce the worst-case analysis to a leader-follower POSG to consider the case where the leader has little knowledge of the adversarial follower. This is a first step for further evaluating the value of an improved understanding of the adversarial follower in a dynamic, multi-agent partially observable stochastic system.
	
	\item (\textit{Theoretically}) We show that the POSG under the worst-case analysis is a single-agent dynamic decision-making problem based solely on information of the leader. This model is unique from the zero-sum POSG and has a sufficient statistic more computationally tractable than the existing sufficient statistics presented in the POSG literature. We investigate the structural properties of the leader's optimal value function and show that it may not be convex.   
	
	\item (\textit{Computationally}) The non-convex structural result limits the usefulness of existing partially observable Markov decision process (POMDP) algorithms in our problem. While the worst-case model can also be viewed as a POMDP with imprecise parameters under the criterion of ``maxmin", currently there are no general algorithms for these problems. In order to establish a bottom-line performance for the leader, we develop a novel backward recursive algorithm to construct a lower bound for the leader's finite-horizon value function and to determine its associated policy. We evaluate the quality of the solution and show that the lower bound is no worse than the value function associated with the \textit{second best} leader's action. We also show that this algorithm can approximately determine a lower bound for the infinite planning horizon problem.
	
	\item (\textit{Application wise}) We illustrate the use of the proposed model and solution procedure via a security problem, where the operations manager of a liquid egg production plant is protecting the facility against an adversary who intends to insert a biological toxin into the system. We test and validate the effectiveness of the developed dynamic defensive strategy using simulation. 
\end{enumerate}
This paper is organized as follows. Section \ref{lit} presents a literature review on stochastic games with incomplete information, POSGs, and POMDPs with imprecise parameters. In Section \ref{problemstatementsec}, we introduce the worst-case analysis model for the general-sum leader-follower POSG, where the objective of the leader is to maximize the expected total discounted reward under the worst-case scenario of the follower. Section \ref{StructuralResults} presents the structural results of the leader's optimal value function, followed by their computational implications. In Section \ref{LowerBoundSolutionApproach}, we propose a three-step solution procedure for constructing a lower bound of the leader's value function and its policy, consisting of the PURGE-step, the DOMINANCE-step, and the APPROXIMATION-step. We discuss each of these steps in detail in Sections \ref{PurgeOperationSection}-\ref{PiecewiseLinearConcaveApproximation}. Specifically, Section \ref{PurgeOperationSection} utilizes an POMDP algorithm to eliminate redundant vectors in constructing the value function for a given leader action; Section \ref{DominanceOperationSection} presents a geometric approach and a mixed integer program to determine the optimal value function; and Section \ref{PiecewiseLinearConcaveApproximation} approximates the resulting value function by a piecewise linear and concave function. The approximation solution is used in the next iteration of the recursive algorithm. We also analyze the error bound of this approach and show our algorithm can be used to construct a lower bound for the leader's infinite-horizon optimal value function. Section \ref{NumericalResults} illustrates the use of the developed approach in a security application. Finally, Section \ref{Conclusions} summarizes research results and discusses future research directions.

\section{Literature Review}
\label{lit}
This section briefly reviews stochastic games with incomplete information, partially observable stochastic games, and the POMDP with imprecise parameters.
\subsection{Stochastic Games with Incomplete Information}
Stochastic games were first developed in Shapley(1953). Afterwards, many extensions were developed to consider the \textit{incomplete information} case where the reward structure and/or transition probabilities are imprecise. For instance, stochastic games with a single non-absorbing state where the payoff follows a given probability distribution were examined by Sorin (1984,1985). Najim et al.(2001) studied the optimization of the limiting average payoff of a zero-sum stochastic game with unknown transition probabilities and average payoffs. Luque-Vasquez and Minjarez-Sosa (2013) and Minjarez-Sosa and Vega-Amaya(2009) considered a zero-sum stochastic game where the payoffs are possibly unbounded. Cheng et al.(2016) developed two approximation methods to solve a two-person zero-sum stochastic game where the payoff matrix entries are independent and normally distributed. Kardes et al.(2011), Kardes(2014) and Rosenberg et al.(2004) analyzed equilibrium points for stochastic games with uncertain payoffs and/or transition probabilities. In all these studies, the state of the system is perfectly observable to each agent. 

\subsection{Partially Observable Stochastic Games} 
POSGs are stochastic games with \textit{imperfect information} where the state of the system is partially observed. A POSG can be transformed to a normal-form game and theoretically solved by iterated elimination of dominated strategies; however, this representation is often too large and not computationally feasible (Hansen et al. 2004). Also, it is impossible to transform POSGs into completely observable stochastic games over belief states, analogous to how a POMDP is solved by transforming it into a MDP over belief states (Hansen et al. 2004). At each stage, each agent will receive a unique observation, leading to various different,  possibly conflicting belief states (Emery-Montemerlo et al. 2004). Moreover, in a multi-agent system, each agent must also consider other agents' beliefs to reason their actions. Even worse, each agent must reason about the beliefs that other agents hold about each other's beliefs, leading to infinitely nested beliefs. As a result, POSGs suffer significant computational challenges: finding an optimal solution or even computing solutions with absolutely bounded error to common-payoff POSGs is $NEXP$-complete (Bernstein et al. 2002; Rabinovich et al. 2012); Goldsmith and Mundhenk(2008) further showed that the complexity of competitive POSGs can rise to $NEXP^{NP}$ (problems are solvable by a $NEXP$ machine using an $NP$ set as an oracle). 

There are no known tractable algorithms for computing optimal (or even reasonable) policies for genera-sum POSGs. Nevertheless, the AI community has made tremendous progress for special classes of POSGs. For example, for two-agent \textit{zero-sum} POSGs, Ghosh et al.(2004) and Saha (2014) showed that the POSG can be transformed to a stochastic game if both agents share a single observation process. Wiggers et al.(2016) analyzed the structural properties of value functions for zero-sum POSGs. However, algorithms for determining optimal policies for zero-sum POSGs are still rare. For \textit{common-payoff} POSGs, also called decentralized POMDP, numerous exact and approximation algorithms have been developed (see reviews in Seuken and Zilberstein 2005; Oliehoek 2012; Oliehoek and Amato 2016). 

The leader-follower POSG in Chang et al. (2015a, 2015b) is a \textit{general-sum} POSG where the leader-follower relationship makes the POSG both theoretically and computationally attractive. Assuming each agent uses finite-memory policies and has complete knowledge of the game setup, there is a \textit{finite-dimensional} sufficient statistic (the belief state) that consolidates all available information history for each agent to make a decision. The belief state is not defined on the state space; rather, it is an agent's belief over the set of all possible finite information histories of the other agent. Consequently, an agent can infer via its belief state both the system's state and the other agent's action. The existence of the sufficient statistics further allows for transforming the general-sum POSG to special structured POMDPs. This model is applied to assess the value of misinformation and disinformation in modern warfare (Chang et al. 2019). While the existing work assumes that each agent has complete knowledge of the game setup (e.g., reward structure, perfect rationality), this paper examines the case where the leader has limited knowledge of the follower (i.e., POSG with incomplete information). 

Another related model is called Interactive POMDPs (Gmytrasiewicz and Doshi 2004). The Interactive POMDP (I-POMDP) is another generalization of POMDPs to multi-agent systems. In this framework, agents are described by a class of possible models, and these agent models are included in the definition of ``interactive states". An agent's belief over these interactive states is a sufficient statistic. However, an agent's belief is also a component of the other agents' models. Thus, these beliefs are infinitely nested, making the problem computationally complex. Existing approximation solution techniques include policy iteration (Sonu and Doshi 2012), point based value iteration (Doshi and Perez 2008), and interactive particle filtering (Doshi and Gmytraslewicz 2009).   

\subsection{Partially Observable Markov Decision Processes with Imprecise Parameters}  POMDPs with imprecise parameters were analyzed in Itoh and Nakamura (2007) under the notion of ``second-order beliefs" which are beliefs in the imprecisely specified transition probabilities and observation probabilities. The authors determined a set of optimal policies, each of which is optimal to at least one of such second-order beliefs. Saghafian (2018) examined the structural results of ambiguous POMDPs using ``$\alpha$-maxmin" expected utility. Within the ``maxmin" criterion, Osogami (2015) proved that the value function can still be convex using the Loomis' Minimax Theorem, given the uncertainty set of the POMDP model parameters is convex. Under the assumption of S-rectangularity, Rasouli and Saghafian (2018) defined a robust POMDP, developed dynamic programming equations for it, and showed a zero-sum POSG can be transformed to a robust POMDP. However, to our best knowledge, there are no general algorithms for POMDPs with imprecise parameters yet (under the criterion of ``maxmin"). We further remark that none of these assumptions necessarily holds in our problem (especially when facing with unknown adversaries in a security context) and we study \textit{general-sum} leader-follower POSGs.

\section{The Worst-Case Analysis Modeling} 
\label{problemstatementsec}
We consider a partially observable stochastic game with a leader ($L$) and a follower ($F$). At each stage, the leader first selects an action, and then the follower determines its action. Once the two actions are selected, the reward for each agent is realized and the system transitions to a new state according to a given probability. However, the leader's information on the adversarial follower is very limited. Specifically, 
\begin{enumerate}[(i)]
	\item \textit{Decision Horizon}: The decision epochs are $t=0,1,2,...,T$ where $T \leq \infty$. 
	\item \textit{State Space}: 	the leader's state $s_t^L\in S^L $ and the follower's state $s_t^F \in S^F$. The state spaces $S^L$ and $S^F$ are assumed to be finite. Each agent knows its own state but may have only inaccurate observations of the other agent's state.
	\item \textit{Action Space}: at each stage, the leader selects $a^L_t \in A^L$ and the follower selects its (true) action $\tilde{a}_t^F\in A^F$, assuming the action spaces $A^L$ and $A^F$ are finite. How the follower selects its action $\tilde{a}_t^F$ is unknown to the leader, and the follower is possibly irrational.  
	\item \textit{Observation Space}: at each stage, the leader receives noisy observation $z_t^L\in Z^L $ of the follower's state $s^F_t$, where the leader's observation space $Z^L$ is finite. What observations $z^F_t$ that the follower may collect is unclear to the leader. 
	\item 	\textit{The State Transition and Observation Probabilities}: The conditional probability for the leader $ P(z^L_{t+1}, s_{t+1}|s_t, a_t)$ is assumed given. The follower's observation probabilities are unknown. 	
	\item \textit{Reward Structure}: the leader's scalar reward is $r^L (s_t,a_t)$, given the state pair $s_t=(s_t^L,s_t^F), s_t^L \in S^L, s_t^F \in S^F$ and action pair $a_t=(a_t^L,a_t^F), a^L_t \in A^L, a^F_t \in A^F$;   the follower's reward $r^F (s_t,a_t)$ is unknown.
\end{enumerate}
The problem \textit{objective} is to determine the best worst-case value function for the leader and its associated policy, given the above assumptions.

Because the follower only chooses $\tilde{a}^F_t$ \textit{after} the selection of the leader's action and $\tilde{a}^F_t$ may not be observable, the worst-case analysis assumes that at each stage $t$, the leader
\begin{enumerate}[(i)]
	\item first \textit{predicts} the follower's worst response action $a_t^F \in A^F$ for each $a^L_t$ (note, $a_t^F \neq \tilde{a}_t^F$); 
	\item determines its best leader's action $a^L_t \in A^L$ (see Figure 1).  
\end{enumerate}
\begin{figure}[H]
	\centering
	\includegraphics[width=0.9\textwidth]{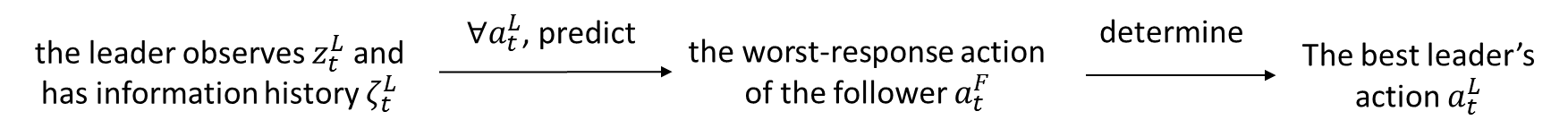}
	\caption{The decision making process of the leader. At each stage, the leader first predicts the follower's worst-response action $a^F_t (a^L_t)$ for each $a^L_t$ on the basis of $\zeta_t^L$, then determines its best leader action $a_t^L$. }
\end{figure}
\vspace{-10pt}
Thus, at time $t$, the leader's information history is $\zeta_t^L=\{s_t^L,...,s_0^L,z_t^L,...,z_1^L,a_{t-1},...,\\a_0,x_0^L\}$, where $x_0^L=\{P(s_0^F ),s_0^F \in S^F\}$ is the leader's prior probability mass vector over $S^F$ and $a_t=(a_t^L,a_t^F)$. Note that the ``predicted" follower's action $a_t^F$ is also a decision variable of the leader, same as $a_t^L$, and $\zeta^L_{t+1}=\{z^L_{t+1}, s^L_{t+1}, a_t, \zeta^L_t \}$. 

The criterion we consider $v_0^L(\zeta_0^L)$ is the expected total discounted reward accrued over horizon $T$. Namely, $v_0^L(\zeta_0^L) = E\{ \sum_{t=0}^{T} \beta^t r^L(s_t,a_t)|\zeta_0^L\}$ for the finite horizon case and $v_0^L(\zeta_0^L) = E\{ \sum_{t=0}^{\infty} \beta^t r^L(s_t,a_t)|\zeta_0^L\}$ for the infinite horizon case, where $E\{.|\zeta_0^L\}$ is the expectation operator conditioned on $\zeta_0^L$, and $\beta \geq 0$ is the discount factor. We assume $\beta<1$ for the infinite horizon case in order to ensure that $E\{ \sum_{t=0}^{\infty} \beta^t r^L(s_t,a_t)|\zeta_0^L\}$ is well defined. The problem objective is to determine a policy pair for the leader  $(\pi^{L,*}, \pi^{F,*}): \{\zeta_t^L\} \rightarrow A^L \times A^F$ such that 
\begin{equation}
v_0^{L,\pi^{L,*}, \pi^{F,*}}(\zeta_0^L) =\max_{\pi^{L} \in \Pi^L} \min_{\pi^{F} \in \Pi^F} E\{ \sum_{t=0}^{T} \beta^t r^L(s_t,a_t)|\zeta_0^L\}, \label{Obj}
\end{equation}
where $\Pi^k$ is the policy space of agent $k, k \in \{L,F\}$. 

The POSG under the worst-case scenario results in a \textit{single-agent} dynamic decision making problem: the leader determines both $a^L_t$ and the worst-case action $a^F_t$ on the basis of $\zeta^L_t$. Thus, this model is fundamentally different from a \textit{zero-sum} POSG (which itself is a challenging problem; see Section \ref{lit}). In zero-sum POSGs, agent $k$ makes its decision based on its own private information history $\zeta^k_t$. That is, assuming the follower is rational in the zero-sum POSG, the follower will select its (true) action $\tilde{a}_{t}^F$ on the basis of $\zeta^F_t$, whereas in the worst-case analysis, the leader has no knowledge of $\zeta^F_t$ and ``predicts" the worst-case action $a^F_t$ based on the leader's knowledge $\zeta_t^L, \zeta_t^L \neq \zeta_t^F$. Secondly, the follower may not be perfectly rational in our worst-case model.

Furthermore, knowing the exact information history that an adversary has is hard: $\zeta_t^F$ can be $\{s_t^F,..., s_0^F,z_t^F,...,z_1^F,\tilde{a}_{t-1}^F,...,\tilde{a}_0^F,x_0^F\}, x_0^F=\{P(s_0^L), s_0^L \in S^L\}$ or other possibly unknown forms. It is also unclear how the follower will utilize $\zeta_t^F$ to make its decision (e.g., the follower is myopic or irrational). The worst-case analysis requires no knowledge of (i) the follower's private information history $\zeta_t^F$, and (ii) how the follower selects its action $\tilde{a}_{t}^F$. Instead, the worst-case modeling analyzes from the leader's perspective; namely, at epoch $t$, the leader predicts the follower's worst response action and selects its best action all based on its own information history $\zeta_t^L$. As a result, Eq. \eqref{Obj} is the baseline performance of the leader; i.e., $\forall \pi^F\in \Pi^F,v_0^{L,\pi^{L,*},\pi^{F,*}} (\zeta_0^L ) \leq v_0^{L, \pi^{L,*},\pi^{F} } (\zeta_0^L)$ for each given $\zeta_0^L$, which is exactly the objective of the ``worst-case analysis" of this paper.   

\section{Structural Results} 
\label{StructuralResults}

Let $v_t^L(\zeta_t^L)$ be the maximal value of the worst-case expected total discounted reward to be accrued from epoch $t$ until $T$, given information history $\zeta_t^L$, then $v_t^L(\zeta_t^L)$ can be described recursively by
\begin{align}
v_t^L(\zeta_t^L) & = \max_{a_t^L \in A^L}\min_{a_t^F \in A^F} \Bigg\{\sum_{s_t^F} r^L(s_t,a_t)P(s_t^F|\zeta_t^L) \notag \\
&+ \beta \sum_{z^L_{t+1}}\sum_{s^L_{t+1}}P(z^L_{t+1},s^L_{t+1}|\zeta^L_t, a_t)v^L_{t+1}(\zeta^L_{t+1})  \Bigg\}. \label{POSGIteration}
\end{align}
Let $x_t^L=\{x_t^L(s^F_t), s^F_t \in S^F\}$, where $x_t^L(s^F_t)=P(s^F_t|\zeta_t^L)$. Thus, $x_t^L$ is a ``belief" array indicating the leader's inference about the follower's state $s^F_t$. Furthermore, the leader's belief process $x^L_t$ is a controlled Markov process as there is a function $\lambda^L$ depending on $z_{t+1}^L, s^L_{t+1}, s^L_t, x^L_t$ and $a_t=(a^L_t,a^F_t)$ such that
\begin{align}
x_{t+1}^L=\lambda^L(z^L_{t+1},s^L_{t+1},s^L_t, x_t^L,a_t)
\end{align} 
where 
\begin{align}
&\lambda^L(z^L_{t+1},s^L_{t+1},s^L_t,x_t^L,a_t)=P(s^F_{t+1}|\zeta^L_{t+1})=P(s^F_{t+1}|z^L_{t+1},s^L_{t+1},a_t,\zeta^L_t) \notag \\&= \frac{P(z^L_{t+1}, s_{t+1}|a_t, \zeta^L_t)}{P(z^L_{t+1}, s^L_{t+1}|a_t, \zeta^L_t)}= \frac{\sum_{s^F_t}P(z^L_{t+1},s_{t+1}|s_t,a_t)x^L(s^F_t)}{ \sum_{s^F_{t+1}}\sum_{s^F_t}P(z^L_{t+1},s_{t+1}|s_t,a_t)x^L(s^F_t)}, \label{POSGxt_1}
\end{align} 
Denote the bottom of Eq. \eqref{POSGxt_1} by
\begin{align}
\sigma^L(z^L_{t+1},s^L_{t+1},s^L_t,x_t^L,a_t) = P(z^L_{t+1}, s^L_{t+1}|a_t, \zeta^L_t), \label{POSGsigma}
\end{align}
assuming it is non-zero.
Let $V$ be the set of all bounded, real-valued functions on $S^L \times X^L$ having supremum norm $||v|| = \sup\{ |v(s^L,x^L)|: s^L \in S^L, x^L \in X^L\}$, where $X^L=\{\sum_{s^F} x^L(s^F)=1, x^L(s^F) \geq 0 \}$.  Then $(V, ||.||)$ is a Banach space. We say a real-valued function $f(s^L, x^L)$ for a fixed $s^L$ is piecewise linear on $X^L$ if there exists a set $\Gamma(s^L)$ depending on $s^L$, $|\Gamma(s^L)|<\infty$ such that: $\forall x^L \in X^L$, there is a $\gamma \in\Gamma(s^L) $ satisfying $f(s^L, x^L) = x^L\gamma$, where $x^L\gamma=\sum_{s^F}x^L(s^F)\gamma(s^F)$. 
Now, define the operator $H: V \rightarrow V$ as 
\begin{align}
&[Hv](s^L,x^L)\notag=\max_{a^L \in A^L}\min_{a^F \in A^F} \Bigg \{ x^Lr^L(s^L,a) \\ & +  \beta \sum_{z^{L'}}\sum_{s^{L'}} \sigma^L(z^{L'},s^{L'}, s^L,x^L,a) v(s^{L'}, \lambda^L(z^{L'},s^{L'}, s^L, x^L,a)) \Bigg \}, \label{POSGH}
\end{align}
where $x^Lr^L(s^L,a) = \sum_{s^F}x^L(s^F)r^L(s^L,s^F,a)$. 

\begin{proposition}
	\label{statistic}
	\begin{align*}
	v_t^L(\zeta_t^L)&=v_t^L(s^L_t, x^L_t)=[Hv^L_{t+1}](s^L_t,x^L_t).
	\end{align*}
	Thus, $v_t^L$ is dependent on $\zeta_t^L$ only through $(s^L_t, x_t^L)$, and $(s^L_t, x_t^L)$ is a sufficient statistic for $v_t^L$. Furthermore, if $v$ is piecewise linear in $x^L$, then $Hv$ is also piecewise linear in $x^L$. 
\end{proposition}

\proof{}
The first part follows by mathematical induction and the fact that both $\sigma^L(z^L_{t+1},s^L_{t+1},s^L_t,x_t^L,a_t)$ and $\lambda^L(z^L_{t+1},s^L_{t+1},s^L_t,x_t^L,a_t)$ are functions of $(s^L_t, x_t^L)$. 
For the second part, assume $v$ is piecewise linear. Equivalently, there exists a finite set $\Gamma(s^{L'})$ such that 
\begin{align*}
v(s^{L'},\lambda^L(z^{L'},s^{L'},s^L,x^L,a)) =  \lambda^L(z^{L'},s^{L'},s^L,x^L,a)\gamma^{l(z^{L'},s^{L'},s^L,x^L,a)}, 
\gamma \in \Gamma(s^{L'}),
\end{align*}

{\noindent where the function $l(z^{L'},s^{L'},s^L,x^L,a)$ defines the index of the $\gamma$ vector corresponding to $\lambda^L(z^{L'},s^{L'},s^L,x^L,a)$. 
	We say $\gamma^{'a} \in \Gamma^{'}(s^L,a)$, if $\gamma^{'a} $ is of the form} 
\begin{align*}
\gamma^{'a}(s^F) &=r^L(s,a)+\beta \sum_{z^{L'}}\sum_{s'}P(z^{L'},s' |s,a) \gamma(s^{F'})^{l(z^{L'},s^{L'},s^L,x^L,a)},
\end{align*}

{\noindent where $\gamma \in \Gamma(s^{L'})$. Note,
	$$[Hv](s^L,x^L)=\max_{a^L}\min_{a^F}\left\{ x^L\gamma^{'a}: \gamma^{'a} \in \Gamma'(s^L) \right\},$$
	where $\Gamma'(s^L)=\cup_{a \in A}\Gamma'(s^L,a)$. Hence, $Hv$ is piecewise linear in $x^L$ for $|A^L \times A^F| <\infty$. }   
\qedsymbol
\endproof

We remark that this sufficient statistic is more computationally tractable than other sufficient statistics presented in the POSG literature. In a multi-agent system, not only do agents have to infer the underlying system state, but also must consider other agents' beliefs in order to infer the other agents' actions.  As a result, the existing sufficient statistics are often complex and in high dimensions: they usually include a belief over the other agent's (finite) information history $\{P(\zeta^{F,\tau}_t|\zeta^L_t)\}$ where $\zeta^{F,\tau}_t = \{s_t^F,..., s_{t-\tau+1}^F,z_t^F,...,z_{t-\tau+1}^F,\tilde{a}_{t-1}^F,...,\tilde{a}_{t-\tau}^F\}$ under the finite-memory assumption (Chang et al. 2015a), or a belief over the other agent's complete model (Gmytrasiewicz and Doshi 2004). The worst-case modeling requires no such assumptions and the leader ``predicts" the follower's worst-case action based on its own knowledge, resulting in a much simpler sufficient statistic and a more computationally attractive problem. 

The worst-case analysis transforms a general-sum POSG to a single agent problem, and hence it is relatively simple compared to multi-agent problems. Further, Eq. \eqref{POSGH} is very similar to a POMDP with the newly defined $\lambda^L$ and $\sigma^L$, suggesting that solution procedures for the POMDP may be relevant to our problem. Unfortunately, the fact that the value function of a POMDP is both piecewise linearity and convexity forms the basis for existing POMDP algorithms (Sondik 1971; Cheng 1988; Pineau et al. 2003; Shani et al. 2013). While Proposition \ref{statistic} guarantees that the operator $H$ in Eq. \eqref{POSGH} preserves piecewise linearity, convexity will not be preserved, as illustrated by the following example. The non-convexity issue significantly limits the usefulness of solution procedures for the POMDP, and also makes our problem unique from the POMDP.  

Consider $|S^F|=|A^L|=2$ and let $v_T^L=0$. Fix a $s^L\in S^L$, then $v_{T-1}^L(s^L,x^L )=\max_{a^L}\min_{a^F} \left\{\sum_{s^F}x^L (s^F ) r^L (s^L,s^F,a^L,a^F ) \right\}=\max\{f_1(x^L), f_2(x^L)\}$ where $f_1(x^L) = \min \{4.6x_1^L + 7.6x_2^L, 8.2x_1^L + x_2^L \}$ (black line), $f_2(x^L)=\min \{1.8x_1^L+3.6x_2^L,0.6x_1^L+5.2x_2^L\}$ (blue line), and $(x_1^L,x_2^L)$ is the leader's belief vector over the follower's state, $x_1^L+x_2^L=1,x_1^L,x_2^L\geq 0$. Clearly, $v_{T-1}^L$ is non-convex in $x^L$ (red line). See Fig. \ref{PLnotC}. 
\begin{figure}[h]
	\centering
	\includegraphics[width=0.45\textwidth]{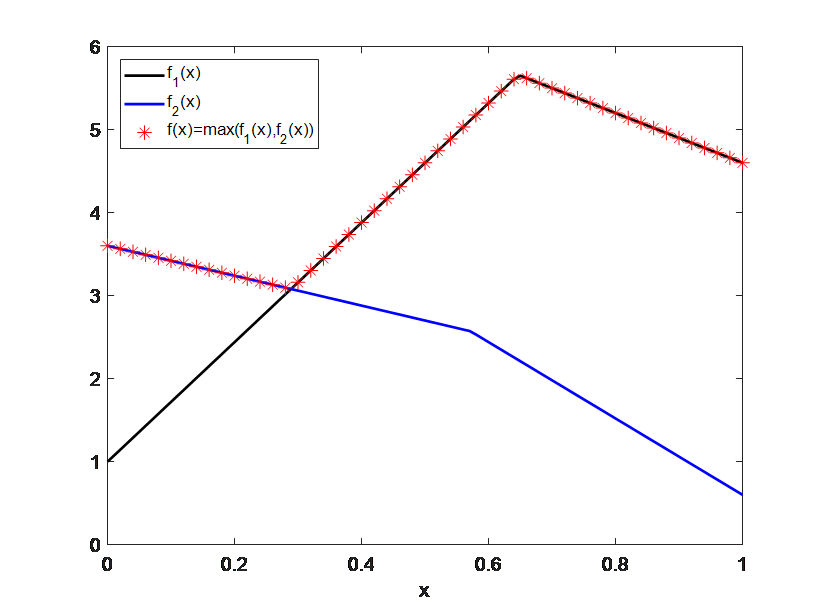}
	\caption{$Hv$ is piecewise linear but not necessarily concave or convex (because the ``maxmin" operation does not preserve convexity).}
	\label{PLnotC}
\end{figure}

For the infinite planning horizon, it is easy to show that 
\begin{proposition}
	$\forall 0\leq \beta < 1$, the operator $H$ is a contraction mapping on $V$ having modulus $\beta$.
	\label{ContractionMapping}
\end{proposition}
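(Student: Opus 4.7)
The plan is to verify the standard Banach-space contraction inequality $\|H^{a^L}u - H^{a^L}v\| \le \beta \|u - v\|$ and then use the same technique one more time to pass from $H^{a^L}$ to $H = \max_{a^L} H^{a^L}$. The whole argument rests on two elementary facts: the ``min of differences'' bound
\begin{equation*}
\Bigl| \min_{a} f(a) - \min_{a} g(a) \Bigr| \;\le\; \max_{a} |f(a) - g(a)|,
\end{equation*}
together with the observation that for each fixed $(s^L, x, a)$, the weights $\sigma(z^{L'},s^{L'},a,x)$ are the conditional probabilities $P(z^{L'}_{t+1},s^{L'}_{t+1}, a_t \mid \zeta_t)$ summed only over $s^F_{t+1}$, so $\sum_{z^{L'}}\sum_{s^{L'}} \sigma(z^{L'},s^{L'},a,x) = 1$.

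For $H^{a^L}$, I would fix an arbitrary $(s^L, x) \in S^L \times X$ and $u, v \in V$. The immediate reward term $xr(s^L, a)$ appears identically in both $[H^{a^L}u](s^L,x)$ and $[H^{a^L}v](s^L,x)$, so applying the min-of-differences inequality in the variable $a^F$ shows that the magnitude of the difference is at most
\begin{equation*}
\beta \max_{a^F \in A^F} \Bigl| \sum_{z^{L'},s^{L'}} \sigma(z^{L'},s^{L'},a,x)\bigl[ u(s^{L'}, \lambda(\cdot)) - v(s^{L'},\lambda(\cdot))\bigr]\Bigr|.
\end{equation*}
Bounding each bracketed term in absolute value by $\|u - v\|$ and pulling the supremum out, and then invoking the fact that the $\sigma$-weights sum to one, yields the bound $\beta \|u - v\|$. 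Since $(s^L, x)$ was arbitrary, taking the supremum on the left establishes the contraction property of $H^{a^L}$.

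For $H$, I would apply the analogous inequality $|\max_{a^L} f(a^L) - \max_{a^L} g(a^L)| \le \max_{a^L} |f(a^L) - g(a^L)|$ with $f(a^L) = [H^{a^L}u](s^L,x)$ and $g(a^L) = [H^{a^L}v](s^L,x)$, and then substitute the bound just proved for each $H^{a^L}$. This gives $\|Hu - Hv\| \le \beta \|u - v\|$ immediately. The argument is essentially routine; the only place that requires care is keeping the roles of the inner $\min_{a^F}$ and outer $\max_{a^L}$ straight and noting that the $\sigma$-weights genuinely sum to one (rather than merely being sub-stochastic) when summed over both $z^{L'}$ and $s^{L'}$, which is what guarantees modulus exactly $\beta$ rather than something larger.
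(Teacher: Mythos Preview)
Your proposal is correct and is precisely the standard contraction argument the paper invokes by citing Proposition 6.2.4 in Puterman (1994); the paper gives no details beyond that reference. Your write-up simply spells out that textbook argument, including the key observation that the weights $\sigma(z^{L'},s^{L'},a,x)$ sum to one over $(z^{L'},s^{L'})$, so there is nothing to add.
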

\proof{}
Let $u,v \in V$, fix $s^L, x^L$ and assume $Hu(s^L, x^L) \geq Hv(s^L, x^L)$, then	
\begin{align*}
& 0\leq [Hu](s^L,x^L)-[Hv](s^L,x^L) \leq \max_{a^L}\Bigg\{\min_{a^F \in A^F} \bigg [ x^Lr^L(s^L,a) \\&+  \beta \sum_{z^{L'}}\sum_{s^{L'}} \sigma^L(z^{L'},s^{L'}, s^L,x^L,a) u(s^{L'}, \lambda^L(z^{L'},s^{L'}, s^L,x^L,a)) \bigg ] 
\\
&-	\min_{a^F \in A^F} \bigg [ x^Lr^L(s^L,a) +  \beta \sum_{z^{L'}}\sum_{s^{L'}} \sigma^L(z^{L'},s^{L'}, s^L,x^L,a) v(s^{L'}, \lambda^L(z^{L'},s^{L'}, s^L,x^L,a)) \bigg]\Bigg\}
\end{align*}
For any $a^L$, let 
$a^{F,*} \in \arg\min \bigg [ x^Lr^L(s^L,a)  +  \beta \sum_{z^{L'}}\sum_{s^{L'}} \sigma^L(z^{L'},s^{L'}, s^L,x^L,a)\\* v(s^{L'}, \lambda^L(z^{L'},s^{L'}, s^L,x^L,a)) \bigg]$
then 
\begin{align*}
&\min_{a^F \in A^F} \bigg [ x^Lr^L(s^L,a) +  \beta \sum_{z^{L'}}\sum_{s^{L'}} \sigma^L(z^{L'},s^{L'}, s^L,x^L,a) u(s^{L'}, \lambda^L(z^{L'},s^{L'}, s^L,x^L,a)) \bigg ] \\&-	\min_{a^F \in A^F} \bigg [ x^Lr^L(s^L,a) +  \beta \sum_{z^{L'}}\sum_{s^{L'}} \sigma^L(z^{L'},s^{L'}, s^L,x^L,a) v(s^{L'}, \lambda^L(z^{L'},s^{L'}, s^L,x^L,a)) \bigg]\\
&\leq \bigg [ x^Lr^L(s^L,a^L, a^{F*}) +  \beta \sum_{z^{L'}}\sum_{s^{L'}} \sigma^L(z^{L'},s^{L'}, s^L,x^L,a^L, a^{F*}) u(s^{L'}, \lambda^L(z^{L'},s^{L'}, s^L,x^L,a^L, a^{F*})) \bigg ] 	\\&- \bigg [ x^Lr^L(s^L,a^L, a^{F*}) +  \beta \sum_{z^{L'}}\sum_{s^{L'}} \sigma^L(z^{L'},s^{L'}, s^L,x^L,a^L, a^{F*}) v(s^{L'}, \lambda^L(z^{L'},s^{L'}, s^L,x^L,a^L, a^{F*})) \bigg ] \\
&\leq \beta||u-v||, \forall a^L
\end{align*}	
Thus, $0\leq [Hu](s^L,x^L)-[Hv](s^L,x^L) \leq \beta ||u-v||$. Repeating this argument in the case that $0 \leq [Hv](s^L,x^L)-[Hu](s^L,x^L)$ shows that $|[Hu](s^L,x^L)-[Hv](s^L,x^L)| \leq \beta ||u-v||$. Taking the supremum over $s^L$ and $x^L$ gives $||Hu-Hv||\leq \beta||u-v||.$ 
\qedsymbol
\endproof

As a result, there is a unique fixed point $v^* \in V$ such that $Hv^* = v^*$. Let the sequence $\{v_n\}$ be defined as $v_{n+1} = Hv_{n}$, then $\lim_{n\rightarrow \infty}||v^*-v_n||=0$, given $\forall v_0 \in V$. Moreover, $v^*$ is continuous. However, it is not guaranteed that $v^*$ is convex or concave. 

We remark that the worst-case model can also be viewed as a POMDP with imprecise parameters, where the leader's reward $r^L$ and dynamics $P(z_{t+1}^L,s_{t+1} |s_t,a_t)$ are imprecise due to the unknown $a_t^F$. However, to our best knowledge, general algorithms do not exist for such problems with the ``maxmin" criterion (the existing literature focuses on imposing conditions to avoid the non-convexity issue; see Section \ref{lit}). In this paper, we propose a novel computationally attractive solution procedure without these conditions that could generate a lower bound for the finite-horizon value function with quantified error bound, and determine its associated control policy. Research on the infinite horizon case is a topic for future research. 

\section{Lower Bound Solution Approach and Policy Determination}
\label{LowerBoundSolutionApproach}
We motivate the lower bound solution approach by a rewrite of Eq. \eqref{POSGH} to two steps: the ``min" step and the ``max" step. Specifically, $\forall a^L \in A^L$, let the operator $H^{a^L}: V \rightarrow V$ be 
\begin{align*}
&[H^{a^L}v](s^L,x^L)=\min_{a^F \in A^F} \Bigg \{ x^Lr^L(s^L,a) \\ &+  \beta \sum_{z^{L'}}\sum_{s^{L'}} \sigma^L(z^{L'},s^{L'}, s^L,x^L,a) v(s^{L'}, \lambda^L(z^{L'},s^{L'},s^L, x^L,a)) \Bigg \},
\end{align*}
then, 
$$Hv = \max_{a^L \in A^L}[H^{a^L}v].$$

For the ``min" step, assume $\tilde{v}^L$ is a piecewise linear and concave approximation of $v^L$ satisfying $\tilde{v}^L \leq v^L$ (i.e., $\tilde{v}^L(s^L,x^L) \leq v^L(s^L,x^L), \forall s^L \in S^L, x^L \in X^L$). Namely, there is a finite set $\tilde{\Gamma}(s^L)$ such that $\tilde{v}^L(s^L,x^L) = \min \{x^L\gamma: \gamma \in \tilde{\Gamma}(s^L) \}$. Then, pick any $a^L \in A^L$,
\begin{align*}
&[H^{a^L}\tilde{v}^L](s^L,x^L)=\min_{a^F \in A^F} \Bigg \{ x^Lr^L(s^L,a) \\& +  \beta \sum_{z^{L'}}\sum_{s^{L'}} \sigma^L(z^{L'},s^{L'}, s^L,x^L,a) 
\times \min[ \lambda^L(z^{L'},s^{L'}, s^L,x^L,a)\gamma': \gamma' \in \tilde{\Gamma}(s^{L'}) ] \Bigg \}\\
&=\min_{a^F \in A^F} \Bigg \{ x^Lr^L(s^L,a)   +  \beta \sum_{z^{L'}}\sum_{s^{L'}} \min [\sum_{s^{F'}}\sum_{s^F}P(z^{L'},s'|s,a) x^L(s^F)\gamma'(s^{F'}): \gamma' \in \tilde{\Gamma}(s^{L'}) ] \Bigg \}\\
&=\min\Bigg \{ x^L\gamma: \gamma \in G(s^L,a^L)  \Bigg \},
\end{align*}
where $\gamma \in G(s^L,a^L)$ if $\gamma = r^L(s,a)+\beta \sum_{z^{L'}}\sum_{s^{'}}P(z^{L'},s'|s,a)\gamma'(s^{F'})$. Thus, $H^{a^L}\tilde{v}^L$ is piecewise linear and concave, and the ``min" step is a POMDP step. 

For the ``max" step, since $Hv = \max_{a^L \in A^L}[H^{a^L}v]$, we have the following: 
\begin{theorem}	\label{doubleindex}
	There is a finite set of arrays $\Gamma(s^L) = \{\gamma^{k_1,k_2}\}_{k_1, k_2 \geq 0}$ only depends on $s^L$ such that 
	$$[H\tilde{v}^L](s^L,x^L) = \max_{k_1}\min_{k_2}\{x^L\gamma^{k_1,k_2}: \gamma^{k_1,k_2} \in \Gamma(s^L) \}.$$
\end{theorem}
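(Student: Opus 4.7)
The plan is to build $\Gamma(s^L)$ directly from the computation already carried out in the paragraph preceding the theorem. That derivation shows that for every fixed $a^L\in A^L$, the operator $H^{a^L}\tilde v$ can be written as $[H^{a^L}\tilde v](s^L,x)=\min\{x\gamma:\gamma\in G(s^L,a^L)\}$ where $G(s^L,a^L)$ is a finite collection of vectors in $\mathbb{R}^{|S^F|}$. The theorem will then follow by taking the outer maximum over $a^L$ and re-indexing.

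The first step is to exhibit $G(s^L,a^L)$ explicitly. Starting from the definition of $H^{a^L}\tilde v$, the inner term $\tilde v(s^{L'},\lambda(z^{L'},s^{L'},a,x))=\min_{\gamma'\in\tilde\Gamma(s^{L'})}\lambda(z^{L'},s^{L'},a,x)\,\gamma'$ appears under independent sums indexed by $(z^{L'},s^{L'})$, so the minimum can be pulled outside by choosing one vector $\gamma'_{z^{L'},s^{L'}}\in\tilde\Gamma(s^{L'})$ for each pair. After multiplying by $\sigma(z^{L'},s^{L'},a,x)$ the factor $x(s^F)$ factors out, yielding
\begin{align*}
\gamma^{a^F,\{\gamma'\}}(s^F)=r(s,a)+\beta\sum_{z^{L'}}\sum_{s'}P(z^{L'},s'\mid s,a)\,\gamma'_{z^{L'},s^{L'}}(s^{F'}),
\end{align*}
and $G(s^L,a^L)$ is the (finite) set of such vectors indexed over $a^F\in A^F$ and choices $\{\gamma'_{z^{L'},s^{L'}}\}_{z^{L'},s^{L'}}\in\prod_{z^{L'},s^{L'}}\tilde\Gamma(s^{L'})$. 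Finiteness of $A^F$, $Z^L$, $S^L$, and each $\tilde\Gamma(s^{L'})$ guarantees that $|G(s^L,a^L)|<\infty$.

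The second step is the re-indexing. Since $H\tilde v=\max_{a^L\in A^L}H^{a^L}\tilde v$, we get $[H\tilde v](s^L,x)=\max_{a^L}\min_{\gamma\in G(s^L,a^L)}x\gamma$. Let the index $k_1$ enumerate the finite set $A^L$, and for each such $k_1$ let $k_2$ enumerate the finite set $G(s^L,a^L)$ produced in step one. Define $\gamma^{k_1,k_2}$ to be the corresponding vector and set $\Gamma(s^L)=\{\gamma^{k_1,k_2}\}_{k_1,k_2\ge 0}$; this is finite and depends only on $s^L$ (the dependence on $x$ has been pushed into the bilinear pairing $x\gamma$). Substituting gives exactly $[H\tilde v](s^L,x)=\max_{k_1}\min_{k_2}\{x\gamma^{k_1,k_2}:\gamma^{k_1,k_2}\in\Gamma(s^L)\}$.

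There is essentially no combinatorial obstacle here; the theorem is a re-packaging of the chain of equalities that precedes it. The only point that deserves care is the exchange of $\min$ with the sums over $(z^{L'},s^{L'})$ in step one, which is valid precisely because the terms in $\tilde v(s^{L'},\lambda(z^{L'},s^{L'},a,x))$ at different $(z^{L'},s^{L'})$ involve disjoint minimization variables $\gamma'_{z^{L'},s^{L'}}$. Everything after that is bookkeeping to fit the result into the double-indexed $\max\min$ form claimed in the statement.
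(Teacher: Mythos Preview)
Your proposal is correct and follows essentially the same approach as the paper: the paper's proof simply cites the preceding derivation that $H^{a^L}\tilde v(s^L,x)=\min\{x\gamma:\gamma\in G(s^L,a^L)\}$ (piecewise linear and concave) together with the argument of Proposition~\ref{linearity}, and then takes the outer $\max_{a^L}$. You have just written out those steps explicitly, including the justification for exchanging the inner minimum with the independent sums over $(z^{L'},s^{L'})$, which is the only point requiring care.
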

\proof{}
See the paragraphs above the Theorem. \qedsymbol
\endproof
\textbf{Policy Determination:} Each element of $\Gamma(s^L)$ is associated with a pair of action $(a^L, a^F)$. Denote $\Gamma(s^L,k_1)=\{\gamma^{k_1',k_2'}: \gamma^{k_1',k_2'} \in \Gamma(s^L), k_1' =k_1\}$. Each \textit{set} $\Gamma(s^L, k_1)$ corresponds to a leader's action $a^L$, and each \textit{vector} $\gamma \in \Gamma(s^L,k_1)$ is associated with a follower's worst-case action $a^F$. The best worst-case policy for $H\tilde{v}$ can thus be determined by the following steps: 
\begin{enumerate}[(i)]
	\item determine $(s^L,x^L)$;
	\item for each $k_1$, find $\gamma^{k_1,*}$ in $\arg\min\{x^L\gamma: \gamma \in \Gamma(s^L,k_1)\}$, and let $\Theta(s^L)=\cup_{k_1} \{\gamma^{k_1,*} \}$;
	\item determine $\gamma^{*,*}$ in $\arg\max\{x^L \gamma: \gamma \in \Theta(s^L)\}$;
	\item select the action pair associated with $\gamma^{*,*}$.
\end{enumerate}

In a nutshell, at time t, if there is a $\tilde{v}^L_{t+1}$ being a piecewise linear concave approximation of $v_{t+1}^L$, $ \tilde{v}^L_{t+1} \leq v_{t+1}^L$, we can determine $H^{a^L} \tilde{v}^L_{t+1}$ and $H\tilde{v}^L_{t+1}$ by POMDP algorithms and Theorem \ref{doubleindex}, respectively. Moreover, 
\begin{proposition}
	If $\tilde{v}^L_{t+1} \leq v_{t+1}^L$, then $\bar{v}_t^L=H\tilde{v}^L_{t+1} \leq v^L_t=Hv_{t+1}^L$.
	\label{perserveOrder}	
\end{proposition}
\proof{}
This is due to the definition of $H$ and $\sigma^L(z^{L'},s^{L'},s^L,x^L,a) \geq 0$. 
\qedsymbol
\endproof

As $\bar{v}_t^L$ is again non-convex, we then need to find another $\tilde{v}^L_t$ to best approximate $\bar{v}_t^L$ and repeat the process for time $t-1$. Fig. \ref{ThreeStep} presents a three-step procedure to implement this idea for the finite planning horizon problem.

\begin{figure}[h]
	\begin{algorithmic}\linespread{0.83}\selectfont		
		\State\parbox[t]{\dimexpr\linewidth-\algorithmicindent}{	\hangindent=30pt{\texttt{PURGE-step.} Determine $\bar{v}^{a^L}_t(s^L,x^L)=[H^{a^L}\tilde{v}_{t+1}^L](s^L,x^L) = \min\{x^L\gamma: \gamma \in G(s^L, a^L) \}$, for each leader's action $a^L$. POMDP techniques can be employed to efficiently eliminate redundant $\gamma$-vectors in $G(s^L,a^L)$ by the PURGE operator (the ``min" step).\\} }
		\State\parbox[t]{\dimexpr\linewidth-\algorithmicindent}{	\hangindent=30pt{\texttt{DOMINANCE-step.} Determine $\bar{v}_t^L(s^L,x^L)=[H\tilde{v}_{t+1}^L](s^L,x^L) = \max_{k_1}\min_{k_2}\{x^L\gamma^{k_1,k_2}: \gamma^{k_1,k_2} \in \Gamma_{t}(s^L) \}$. While $\Gamma_{t}(s^L)$ can be $\cup_{a^L \in A^L}G(s^L,a^L)$, we seek to effectively remove redundant sets $G(s^L,a^L)$ in $\Gamma_{t}(s^L)$ quickly by the DOMINANCE operator via computational geometry and mixed integer programs (the ``max" step).\\}}
		\State\parbox[t]{\dimexpr\linewidth-\algorithmicindent}{	\hangindent=30pt{\texttt{APPROXIMATION-step.} Determine $\tilde{\Gamma}_t(s^L)$ where $\tilde{v}_t^L(s^L,x^L) = \min\{x^L\gamma: \gamma \in \tilde{\Gamma}_t(s^L) \}$ is the best piecewise linear concave approximation of $\bar{v}_t^L$ satisfying $\tilde{v}_t^L \leq \bar{v}_t^L$.\\}}
	\end{algorithmic}
	\caption{The three steps for the lower bound solution approach}\label{ThreeStep}
\end{figure}
Specifically,
given $\tilde{v}_{t+1}^L$ being a piecewise linear concave approximation of $v^L_{t+1}$, $\bar{v}^{a^L}_t(s^L,x^L) =[H^{a^L}\tilde{v}_{t+1}](s^L,x^L)= \min\{x^L\gamma: \gamma \in G(s^L,a^L)\}$ is a POMDP step for $\forall a^L \in A^L$. As in all POMDP problems, the set $G(s^L,a^L)$ may contain many redundant $\gamma$-vectors never used in determining $\bar{v}^{a^L}_t$. The PURGE-\textit{step} is to remove all redundant $\gamma$-vectors in each $G(s^L,a^L)$, which can be accomplished by the PURGE operator in the POMDP literature. According to Theorem \ref{doubleindex} and $H\tilde{v}_{t+1}=\max_{a^L \in A^L}[H^{a^L}\tilde{v}_{t+1}]$, we could set $\Gamma_t(s^L) =G(s^L)$ where $G(s^L)=\cup_{a^L \in A^L}G(s^L,a^L)$; however, the resulting set can be very large. The  DOMINANCE-\textit{step} is to determine the set $\Gamma_t(s^L)$ that has the smallest cardinality and satisfies $\max_{k_1}\min_{k_2}\{x^L\gamma^{k_1,k_2}: \gamma \in \Gamma_t(s^L) \} = \max_{a^L}\min_{k_2}\{x^L\gamma^{a^L,k_2}: \gamma^{a^L,k_2} \in G(s^L) \}$ for computational advantage. The DOMINANCE operator is designed via computational geometry and mixed integer programs (MIPs). Because the resulting $\bar{v}_{t}^L=H\tilde{v}_{t+1}^L$ is again non-convex, the APPROXIMATION-\textit{step} approximates $\bar{v}_{t}^L$ with quantified error by a piecewise linear and concave function $\tilde{v}_t^L$ satisfying $\tilde{v}_t^L \leq \bar{v}_{t}^L$ for the next iteration.

Performing all required operations and approximation, we have developed a backward recursive algorithm for determining a lower bound of the leader's best worst-case value function and its associated control policy for the finite-horizon POSG. The pseudocode of the entire procedure is summarized in Algorithm \ref{overall}. The rest of the paper presents each step in more detail. 

\begin{algorithm}[h]
	\caption{Entire Algorithm for a Finite Horizon Partially Observable Stochastic Game}
	\begin{algorithmic}	\linespread{0.83}\selectfont	
		\State{\texttt{Set $\Gamma_T(s^L) = \emptyset$, $\tilde{\Gamma}_T(s^L) = \emptyset$, $\forall s^L \in S^L$, and $t=T-1$. }}
		\While{($t\geq 0$)} 
		\For{\texttt{each $s^L \in S^L$}}
		\State{PURGE-\textit{step:}}
		\Indent
		\For{\texttt{each $a^L \in A^L$}}
		\State\parbox[t]{\dimexpr 0.9\linewidth-\algorithmicindent}{ \texttt{Set
				$G(s^L,a^L)= \cup_{a^F \in A^F}\Bigg\{r^L(s,a) +\beta\sum_{s'}\sum_{z^L}P(s',z^L|s,a)\gamma(s^{F'}): \gamma \in \tilde{\Gamma}_{t+1}(s^{L'}) \Bigg \}.$ }}
		\State\parbox[t]{\dimexpr0.9\linewidth-\algorithmicindent}{\texttt{$G(s^L,a^L)$=PURGE($G(s^L,a^L)$) to remove redundant $\gamma$-vectors.}}
		\EndFor
		\EndIndent
		\State{DOMINANCE-\textit{step:}}
		\Indent
		\State{\texttt{Set $G(s^L) = \cup_{a^L \in A^L}G(s^L,a^L)$.}}
		\State\parbox[t]{\dimexpr0.88\linewidth-\algorithmicindent}{\hangindent=20pt{\texttt{Select the superset $\Gamma_t^c(s^L)$ out of $G(s^L)$ (Algorithm \ref{geometricalgorithm2}):\\Perform the pairwise dominance procedure on $G(s^L)$ to define the superset $\Gamma_t^c(s^L)$. The set $\Gamma_t^c(s^L)$ is a set of $G(s^L,a^L)$s such that $\forall G(s^L,a^L) \in \Gamma_t^c(s^L)$, there is no set $G(s^L,a^{L'})$, $a^{L'} \neq a^L$, dominating $G(s^L,a^L)$.\\}}\par}				 
		\State\parbox[t]{\dimexpr0.88\linewidth-\algorithmicindent}{\hangindent=20pt{\texttt{Select the set $\Gamma_t(s^L)$ out of $\Gamma_t^c(s^L)$ (Algorithm \ref{MIPDominancealgorithm}):\\Perform the jointly dominance procedure on $\Gamma_t^c(s^L)$ to further remove the subsets in $ \Gamma_t^c(s^L)$ that are strictly dominated by the set $\Gamma_t^c(s^L)$. Thus, $\bar{v}_t^L(s^L,x^L) =\\ \max_{k_1}\min_{k_2} \{x^L\gamma^{k_1,k_2}: \gamma^{k_1,k_2} \in \Gamma_t(s^L) \}$.}}\par}
		\EndIndent
		\State{APPROXIMATION-\textit{step:}}
		\Indent
		\State\parbox[t]{\dimexpr 0.88\linewidth-\algorithmicindent}{\texttt{Determine $\tilde{\Gamma}_t(s^L)$ where \\$\tilde{v}_t^L(s^L,x^L) = \min \{x^L\gamma: \gamma \in \tilde{\Gamma}_t(s^L) \}$ is the best piecewise linear concave approximation of $\bar{v}_t^L(s^L,x^L)$ and $\tilde{v}_t^L \leq \bar{v}_t^L$, and evaluate the approximation error $\epsilon_t(s^L)$ (Algorithm \ref{PLCA})}.}
		\EndIndent
		\EndFor
		\State Set $t=t-1$. 
		\EndWhile
	\end{algorithmic}
	\label{overall}
\end{algorithm}
\vspace{-3pt}
\section{PURGE Operation}
\label{PurgeOperationSection}
Assume a piecewise linear and concave function $\tilde{v}_{t+1}^L$ satisfying $\tilde{v}_{t+1}^L \leq v_{t+1}^L$ is given. That is, there is a set $\tilde{\Gamma}_{t+1}(s^L)$ for each $s^L \in S^L$ where $\tilde{v}_{t+1}^L(s^L,x^L) = \min \{x^L\gamma: \gamma \in \tilde{\Gamma}_{t+1}(s^L) \}$. For any $a^L \in A^L$, we have shown that   $\bar{v}^{a^L}_t(s^L,x^L)=[H^{a^L}\tilde{v}_{t+1}^L](s^L,x^L) = \min \{x^L\gamma: \gamma \in G(s^L,a^L) \}$ where 
\begin{align*}
G(s^L,a^L) = \cup_{a^F \in A^F} \Bigg\{r^L(s,a)+\beta \sum_{z^{L'}}\sum_{s^{'}}P(z^{L'},s'|s,a)\gamma'(s^{F'}): \gamma' \in \tilde{\Gamma}_{t+1}(s^{L'})\Bigg\}.
\end{align*}

A large number of $\gamma$-vectors could be generated in this step, however, only a small number of these vectors define $\bar{v}^{a^L}_t$. A $\gamma \in G(s^L, a^L)$ is \textit{redundant} if and only if for all $x^L \in X^L$, there is a $\gamma' \in G(s^L, a^L), \gamma' \neq \gamma$ such that $x^L\gamma' \leq x^L\gamma$; a $\gamma \in G(s^L, a^L)$ is referred to as a \textit{defining} vector for $\bar{v}^{a^L}_t$ if there is a belief point $x^* \in X^L$ such that $\bar{v}^{a^L}_t(s^L,x^*)=x^*\gamma$ and these belief points are called \textit{witness points} (Lin et al. 2004). The PURGE operators in Cassandra et al.(1997), Lin et al.(2004) (and many others) in the POMDP literature can be employed to efficiently remove redundant vectors from $G(s^L,a^L)$. We adopt the PURGE operator in Lin et al.(2004) in our illustrative example.

\setcounter{equation}{0}

\section{DOMINANCE Operation}
\label{DominanceOperationSection}
We now determine $\Gamma_t(s^L) \subseteq G(s^L)$ where $G(s^L)=\cup_{a^L \in A^L}G(s^L,a^L), \forall s^L \in S^L$ by extending the notion of redundancy of a $\gamma$-vector to a set. For a given $s^L \in S^L$, a set $G(s^L,a^L)$ is \textit{dominated} by $\Gamma_t(s^L)$ on $X^L$ if and only if $\forall x^L \in X^L$, there is always a set $G(s^L,a^{L'})$ in $\Gamma_t(s^L)$ such that $ \bar{v}^{a^{L}}_t(s^L,x^L) \leq \bar{v}^{a^{L'}}_t(s^L,x^L)$ where $\bar{v}_t^{a^L}=\min\{x^L\gamma: \gamma \in G(s^L,a^L) \}$; a set $G(s^L,a^{L})$ is referred to as \textit{supporting} if there is at least one belief point $x'\in X^L$ such that $\bar{v}_t^L(s^L,x')=\max_{k_1}\min_{k_2}\{x'\gamma^{k_1,k_2}: \gamma^{k_1,k_2} \in \Gamma(s^L) \}=\min\{x'\gamma: \gamma \in G(s^L,a^{L}) \}$. For example, both sets $G(s^L,a^L_1)$ and $G(s^L,a^L_4)$ in Fig. \ref{DominatedSupportingSets} are dominated sets, while sets $G(s^L,a^L_2)$ and $G(s^L,a^{L}_3)$ are supporting for $\bar{v}_t^L$.  We seek to remove all dominated sets in order to define $\Gamma_t$ efficiently. 
\begin{figure}[h]
	\centering
	\includegraphics[width=0.6\textwidth]{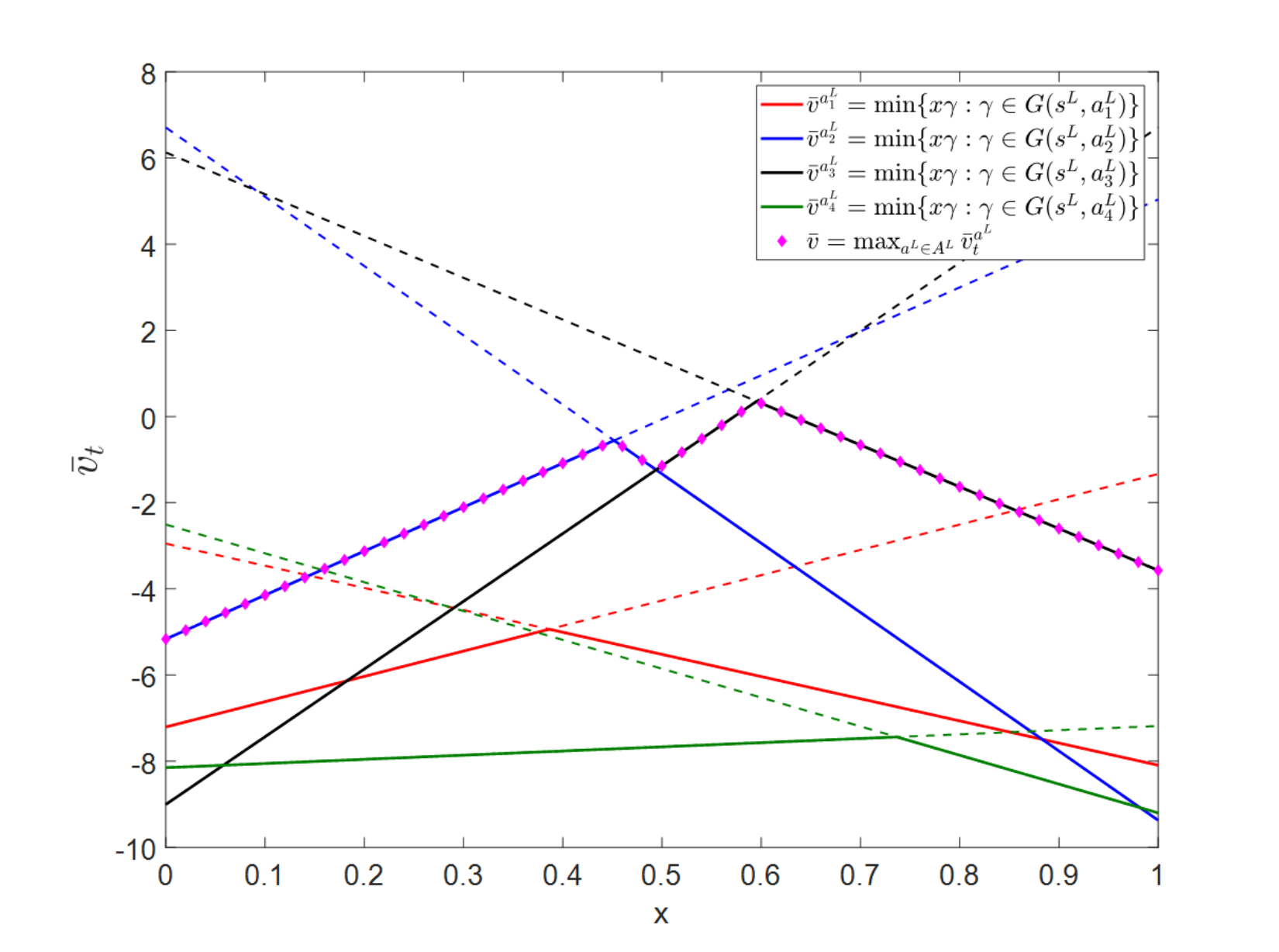}
	\caption{dominated and supporting sets}
	\label{DominatedSupportingSets}
\end{figure}

Let the DOMINANCE operator be that $\Gamma(s^L)=\mbox{DOMINANCE}(G(s^L))$ only contains supporting sets of $\bar{v}_t^L$. We present a two-step procedure for the DOMINANCE operator. We say a set $G(s^L,a^L)$ is \textit{pair-wise dominated} by a set $G(s^L,a^{L'})$ if and only if $\forall x^L\in X^L$, $\bar{v}^{a^{L}}(s^L,x^L) \leq \bar{v}^{a^{L'}}(s^L,x^L)$. For example, set $G(s^L,a^L_4)$ is pair-wise dominated by the set $G(s^L,a^L_1)$ in  Fig. \ref{DominatedSupportingSets}.

The first step is to build a superset $\Gamma_{t}^c(s^L) \supseteq \Gamma_{t}(s^L)$, where for any $G(s^L,a^{L})$ in $\Gamma_{t}^c(s^L)$, there is no $G(s^L,a^{L'})$ in $\Gamma_{t}^c(s^L)$ pair-wise dominating $G(s^L,a^{L})$. We show the pairwise dominance relationship between two sets in $G(s^L)$ can be determined efficiently by a sequence of linear programs (LPs), employing a dual relationship between hyperplanes and points. However, a set $G(s^L,a^{L})$ in $\Gamma_{t}^c(s^L)$ could still be a dominated set, such as $G(s^L,a^{L}_1)$ in Fig. \ref{DominatedSupportingSets}. The second step employs a MIP to further remove all dominated sets in $\Gamma_{t}^c(s^L)$. The first step is to substantially decrease the number and the size of MIPs encountered in the second step. 

\subsection{Determine the Superset $\Gamma_{t}^c(s^L) \supseteq \Gamma_{t}(s^L)$}
In computational geometry, the \textit{dual} of a hyperplane $p(u)=\sum_{i=1}^{n-1}p_iu_i +p_n$ in the primal $R^n$ space is the point $p^*=(p_1,...p_n) \in R^n$, and the dual of a point $p=(p_1,...p_n) \in R^n$ is a hyperplane $p^*(u)=\sum_{i=1}^{n-1}p_iu_i +p_n$. The \textit{lower envelope} of a given set of hyperplanes $\{p^k(u)=\sum_{i=1}^{n-1}p^k_iu_i +p^k_n\}_{k \in K}$ is the piecewise linear and concave function $\underbar{p}(u)=\min_{k \in K} \{\sum_{i=1}^{n-1}p^k_iu_i +p^k_n\}$, whereas the \textit{upper envelope} of the given set of hyperplanes is the piecewise linear and convex function $\bar{p}(u)=\max_{k \in K} \{\sum_{i=1}^{n-1}p^k_iu_i +p^k_n\}$. Each of the hyperplanes on the upper envelope in the primal space corresponds to a vertice of the \textit{upper convex hull} (with respect to the $p_n$-axis) in the dual space (de Berg et al. 2008; Zhang 2010). We need the following definitions in Zhang (2010) to proceed. 

Given a set $\Omega \in R^{|S^F|}$ of points, the \textit{convex hull} is the set $Co(\Omega) \equiv \{\sum_{j=1}\lambda_jw_j: \sum_{j=1}\lambda_j=1 \mbox{ and } w_j \in \Omega, \lambda_j \geq 0, \forall j \}$. The surface of the convex hull with negative outernormal directions, the \textit{negative convex hull} ($NCo$), is the set $NCo(\Omega) \equiv cl(\{w \in Co(\Omega): \exists x \in X^+, xw \leq x\gamma, \forall \gamma \in Co(\Omega) \})$, where $X^+ = \{x \in X \mbox{ and } x_i >0, \forall i \}$, and $cl(B)$ is the closure of $B$. Then,
\begin{lemma}
	Suppose that $\Omega =G(s^L,a^L) \in R^{|S^F|}$ is closed and bounded, hence, compacted. For any given $a^L$ and $s^L$, the piecewise linear and concave function $\bar{v}^{a^L}(s^L,x^L) = \min \{x^L\gamma: \gamma \in G(s^L,a^L) \}, x^L \in X^L $, is dual to the set $NCo(\Omega)$. Namely, for any $\hat{x}^L \in X^L$, there exists a $\hat{\gamma} \in NCo(\Omega)$ such that $\bar{v}^{a^L}(s^L,\hat{x}^L)= \hat{x}^L\hat{\gamma}$, and conversely, for any $\hat{\gamma} \in NCo(\Omega)$, there is a $\hat{x}^L \in X^L$ such that $\bar{v}^{a^L}(s^L,\hat{x}^L) = \hat{x}^L\hat{\gamma}$.
	\label{geometricLemma}
\end{lemma}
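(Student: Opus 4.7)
The plan is to establish the two directions of the duality separately, leveraging compactness of $\Omega = G(s^L,a^L)$ together with the geometric observation that minimizing a linear functional $x\cdot(\cdot)$ over the convex hull $Co(\Omega)$ always attains its value at an extreme point, which must lie in $\Omega$ itself.

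For the forward direction, given $\hat{x} \in X$, I would first invoke compactness of $G(s^L,a^L)$ to produce a minimizer $\hat{\gamma} \in G(s^L,a^L) \subseteq Co(\Omega)$ with $\hat{x}\hat{\gamma} = \bar{v}^{a^L}(s^L,\hat{x})$. To place $\hat{\gamma}$ in $NCo(\Omega)$ I must verify $\hat{x}\hat{\gamma} \leq \hat{x}\gamma$ for every $\gamma \in Co(\Omega)$, not merely for every $\gamma \in G(s^L,a^L)$; this is immediate because any $\gamma \in Co(\Omega)$ can be written as $\sum_j \lambda_j w_j$ with $w_j \in \Omega$, and linearity of the inner product together with $\hat{x} w_j \geq \hat{x}\hat{\gamma}$ preserves the inequality under averaging. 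When $\hat{x} \in X^+$ the conclusion is immediate. When $\hat{x}$ sits on the boundary of $X$, I would approximate $\hat{x}$ by $x_n \in X^+$, produce corresponding minimizers $\hat{\gamma}_n \in G(s^L,a^L)$ that lie in the defining set of $NCo(\Omega)$, extract a convergent subsequence in the compact set $G(s^L,a^L)$, and take the limit; the limit belongs to $NCo(\Omega)$ by the closure clause in its definition, and continuity of $(x,\gamma) \mapsto x\gamma$ transfers the equality $\bar{v}^{a^L}(s^L,\hat{x}) = \hat{x}\hat{\gamma}$ to the limit point.

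For the reverse direction, given $\hat{\gamma} \in NCo(\Omega)$, the closure clause supplies a sequence $\hat{\gamma}_n \in Co(\Omega)$ converging to $\hat{\gamma}$ with strict witnesses $x_n \in X^+$ such that $x_n \hat{\gamma}_n \leq x_n \gamma$ for all $\gamma \in Co(\Omega)$. Compactness of $X$ yields a subsequence with $x_n \to \hat{x} \in X$, and passing to the limit delivers $\hat{x}\hat{\gamma} \leq \hat{x}\gamma$ for every $\gamma \in Co(\Omega)$. Specializing to $\gamma \in G(s^L,a^L)$ gives $\hat{x}\hat{\gamma} \leq \bar{v}^{a^L}(s^L,\hat{x})$. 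The matching inequality follows by expressing $\hat{\gamma} = \sum_j \lambda_j w_j$ with $w_j \in G(s^L,a^L)$ and bounding $\hat{x}\hat{\gamma} = \sum_j \lambda_j \hat{x} w_j \geq \min_j \hat{x} w_j \geq \bar{v}^{a^L}(s^L,\hat{x})$.

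The main obstacle I anticipate is the careful bookkeeping at the boundary of $X$: the witness set $X^+$ in the definition of $NCo$ is open, whereas $X$ itself is compact, so the closure in the definition of $NCo(\Omega)$ cannot be dispensed with. Making sure that boundary belief vectors still admit a dual point in $NCo(\Omega)$ — and vice versa — requires a mild compactness/continuity argument rather than a direct algebraic substitution. Once this technicality is dispatched, the rest of the proof is bilinearity of $x\gamma$ combined with the convex-combination reasoning sketched above, and requires no further machinery.
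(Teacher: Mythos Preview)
Your argument is correct in both directions: the forward direction uses compactness of $\Omega$ and linearity to place a minimizer in $NCo(\Omega)$ (with a limiting argument for boundary $\hat{x}$), and the reverse direction extracts a witness $\hat{x}$ via compactness of $X$ and the closure clause, then sandwiches $\hat{x}\hat{\gamma}$ between two copies of $\bar{v}^{a^L}(s^L,\hat{x})$ by the convex-combination trick. The only minor point worth stating explicitly is that $Co(\Omega)$ is closed (indeed compact) because $\Omega$ is compact in $\mathbb{R}^{|S^F|}$, so that $NCo(\Omega)\subseteq Co(\Omega)$ and the representation $\hat{\gamma}=\sum_j\lambda_j w_j$ with $w_j\in\Omega$ is legitimate; you implicitly use this in the last step.

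As for comparison with the paper: the paper does not give an independent proof of this lemma at all --- it simply writes ``It follows the proof of Lemma~1 in Zhang (2010).'' Your proposal therefore supplies exactly the details the paper omits. The compactness/continuity bookkeeping you carry out for boundary belief vectors is the standard way to handle the closure in the definition of $NCo$, and is presumably what Zhang's original argument does as well, so there is no substantive divergence in approach.
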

\proof{}
It follows the proof of Lemma 1 in Zhang (2010). \qedsymbol
\endproof

We now determine whether a set $G(s^L, a^L)$ is pair-wise dominated by a set $G(s^L, a^{L'})$ based on the geometric relationship. Without loss of generality, assume $s^L$ is given and there is $x^0$ such that $\bar{v}^{a^L}_t(s^L, x^0) \leq \bar{v}^{a^{L'}}_t(s^L, x^0)$. Pick any $\gamma \in G(s^L, a^{L'})$ and define the set $\Phi(\gamma) \equiv \{(\lambda_1,...,\lambda_n):\gamma \geq \sum_{i=1}^n \lambda_iw^i, w^i \in G(s^L, a^L), \sum_{i=1}^n \lambda_i=1, \lambda_i \geq 0\}$. Proposition \ref{PairwiseDominanceProp} shows that determining the dominance relationship is equivalent to check whether $\Phi(\gamma)$ is empty for every $\gamma \in G(s^L, a^{L'})$. The detailed pseudocode is summarized in Algorithm \ref{pairwise}.

\begin{proposition}
	$G(s^L, a^L)$ is dominated by $G(s^L, a^{L'})$ if and only if for each $\gamma \in G(s^L, a^{L'})$, the set $\Phi(\gamma)$ is non-empty.
	\label{PairwiseDominanceProp}
\end{proposition}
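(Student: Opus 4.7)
My plan is to prove both directions, with the sufficient direction handled by a straightforward inequality chain and the necessary direction handled by a separation argument on convex sets in $\mathbb{R}^{|S^F|}$.

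\textbf{Sufficiency.} Assume that $\Phi(\gamma)$ is nonempty for every $\gamma \in G(s^L,a^{L'})$. Fix an arbitrary $\gamma \in G(s^L,a^{L'})$ and choose $(\lambda_1,\ldots,\lambda_n) \in \Phi(\gamma)$, so that $\gamma \geq \sum_i \lambda_i w^i$ componentwise with $w^i \in G(s^L,a^L)$. For any $x \in X$, since $x \geq 0$,
\begin{equation*}
x\gamma \;\geq\; x\!\left(\sum_i \lambda_i w^i\right) \;=\; \sum_i \lambda_i (x w^i) \;\geq\; \min_i x w^i \;\geq\; \bar{v}^{a^L}_t(s^L,x).
\end{equation*}
Taking the minimum over all $\gamma \in G(s^L,a^{L'})$ yields $\bar{v}^{a^{L'}}_t(s^L,x) \geq \bar{v}^{a^L}_t(s^L,x)$, i.e., $G(s^L,a^L)$ is dominated by $G(s^L,a^{L'})$.

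\textbf{Necessity.} I would proceed by contradiction. Suppose some $\gamma^0 \in G(s^L,a^{L'})$ has $\Phi(\gamma^0) = \emptyset$. Let $C \equiv Co(G(s^L,a^L))$, which is a compact convex polytope since $G(s^L,a^L)$ is finite. Emptiness of $\Phi(\gamma^0)$ means $\gamma^0$ is not componentwise above any point in $C$; equivalently, $\gamma^0 \notin K$, where $K \equiv C + \mathbb{R}^{|S^F|}_{\geq 0}$. The set $K$ is closed (Minkowski sum of a compact set and a closed set) and convex. By the strict separation theorem, there exist $x^* \in \mathbb{R}^{|S^F|}\setminus\{0\}$ and $\alpha \in \mathbb{R}$ such that $x^*\gamma^0 < \alpha \leq x^* z$ for every $z \in K$. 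I would then argue that $x^*$ must be componentwise nonnegative: for any $c \in C$ and any coordinate $i$, $c + t e_i \in K$ for all $t \geq 0$, so $x^*(c + t e_i) = x^* c + t x^*_i \geq \alpha$ for all $t \geq 0$, forcing $x^*_i \geq 0$. Normalizing $\hat{x} \equiv x^*/\|x^*\|_1 \in X$, I obtain $\hat{x}\gamma^0 < \hat{x} w$ for every $w \in G(s^L,a^L)$, so
\begin{equation*}
\bar{v}^{a^{L'}}_t(s^L,\hat{x}) \;\leq\; \hat{x}\gamma^0 \;<\; \min_{w \in G(s^L,a^L)} \hat{x} w \;=\; \bar{v}^{a^L}_t(s^L,\hat{x}),
\end{equation*}
contradicting the assumed pair-wise domination of $G(s^L,a^L)$ by $G(s^L,a^{L'})$.

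\textbf{Main obstacle.} The delicate step is ensuring the separating normal can be rescaled into the belief simplex $X$. This requires two things: first, that $x^*$ is nonnegative, which is handled by exploiting the recession directions of $K$ as above; second, that $x^* \neq 0$ after this step, which follows because $x^*=0$ combined with the strict inequality $x^*\gamma^0 < \alpha$ would give $0 < \alpha$, contradicting $\alpha \leq x^* c = 0$ for any $c \in C$. Aside from this, one should note that the geometric picture here is exactly the dual statement of Lemma \ref{geometricLemma}: pair-wise dominance translates to the containment $G(s^L,a^{L'}) \subseteq NCo(G(s^L,a^L)) + \mathbb{R}^{|S^F|}_{\geq 0}$, and the separating hyperplane argument is the standard way to witness non-containment.
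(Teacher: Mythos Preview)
Your proof is correct. The sufficiency direction is essentially the same inequality chain the paper uses (the paper phrases it through Lemma~\ref{geometricLemma}, but the content is identical). The necessity direction, however, is genuinely different. The paper's argument relies on two ingredients you do not use: (i) the post-PURGE assumption that every $\gamma \in G(s^L,a^{L'})$ is a \emph{defining} vector, so that there is a witness $x^*\in X$ with $\bar v^{a^{L'}}_t(s^L,x^*)=x^*\gamma^*$; and (ii) Lemma~\ref{geometricLemma} to pass from $\Phi(\gamma^*)=\emptyset$ to a comparison with $NCo(G(s^L,a^L))$ at that witness point. Your separating-hyperplane argument on $K=Co(G(s^L,a^L))+\mathbb{R}^{|S^F|}_{\ge 0}$ is more self-contained: it produces the belief point $\hat x$ directly from the separation normal, so it needs neither the defining-vector assumption nor the $NCo$ duality lemma, and it also dispenses with the standing hypothesis that $\bar v^{a^L}_t(s^L,x^0)\le \bar v^{a^{L'}}_t(s^L,x^0)$ at some $x^0$. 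What the paper's route buys is consistency with the geometric machinery already set up (Lemma~\ref{geometricLemma} and the $NCo$ picture), which feeds naturally into Proposition~\ref{NotIFFProp}; what your route buys is a cleaner, standalone argument that works even before PURGE has removed redundant vectors.
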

\proof{}
Assume for each $\gamma \in G(s^L, a^{L'})$, $\Phi(\gamma)$ is non-empty. Equivalently, $\forall \gamma \in G(s^L, a^{L'})$, there is $\omega_\gamma \in Co(G(s^L, a^{L}))$ such that $\gamma \geq \omega_\gamma$. Pick $\forall \hat{x}^L \in X^L$ and let $\hat{\gamma} \in \arg\min \{\hat{x}^L\gamma: \gamma \in G(s^L, a^{L'})\}$. Thus, $\bar{v}^{a^{L'}}_t(s^L, \hat{x}^L) = \hat{x}^L\hat{\gamma} \geq \hat{x}^L\omega_{\hat{\gamma}}$. Lemma \ref{geometricLemma} guarantees that there is a $\hat{\omega} \in NCo(G(s^L, a^{L}))$ satisfying $\bar{v}^{a^{L}}_t(s^L, \hat{x}^L)=\hat{x}^L\hat{\omega} \leq \hat{x}^L\omega_{\hat{\gamma}}$. Thus, $\bar{v}^{a^{L}}_t(s^L, x^L) \leq \bar{v}^{a^{L'}}_t(s^L, x^L), \forall x^L \in X^L$ and the set $G(s^L, a^L)$ is dominated by the set $G(s^L, a^{L'})$. 

Conversely, let $\gamma^* \in G(s^L, a^{L'})$ be the $\gamma$-vector such that $\Phi(\gamma^*) = \emptyset$. Equivalently, $\gamma^* < \omega, \forall \omega \in NCo(G(s^L, a^{L}))$. As every $\gamma$-vector in $G(s^L,a^{L'})$ is a defining vector for $\bar{v}^{a^{L'}}$, there is $x^* \in X^L$ such that $\bar{v}^{a^{L'}}(s^L,x^*) = x^*\gamma^*$. Lemma \ref{geometricLemma} further guarantees that there is a $\omega^* \in NCo(G(s^L, a^{L}))$ satisfying $\bar{v}^{a^{L}}(s^L,x^*) = x^*\omega^*$. Thus, $\bar{v}^{a^{L'}}(s^L,x^*) < \bar{v}^{a^{L}}(s^L,x^*)$. Since $\bar{v}^{a^L}_t(s^L, x^0) \leq \bar{v}^{a^{L'}}_t(s^L, x^0)$ by assumption, both functions are continuous, and $X^L$ is connected, the two functions intersect over $X^L$. \qedsymbol
\endproof

\begin{algorithm}[h]
	\caption{Determining the Pairwise Dominance between $G(s^L,a^L)$ and $G(s^L,a^{L'})$.}	
	\begin{algorithmic}\linespread{0.83}\selectfont	
		\State{\texttt{Initialization:}}
		\Indent
		\State\parbox[t]{\dimexpr0.95\linewidth-\algorithmicindent}{\texttt{Randomly generate $x^0 \in X^L$ and calculate $\bar{v}^{a^L}(s^L,x^0)$ and $\bar{v}^{a^{L'}}(s^L,x^0)$, where $\bar{v}^{a^L}(s^L,x^L)=\min\{x^L\gamma: \gamma \in G(s^L,a^L) \}$. If $\bar{v}^{a^L}(s^L,x^0) > \bar{v}^{a^{L'}}(s^L,x^0)$, switch $G(s^L,a^L)$ and $G(s^L,a^{L'})$. }}
		\State\parbox[t]{\dimexpr0.95\linewidth-\algorithmicindent}{\texttt{Set $n=|G(s^L,a^L)|$ and PairwiseDominance=TRUE.}}
		\EndIndent	
		\For{\texttt{each $\gamma \in G(s^L,a^{L'})$}}
		
		\State \Indent\parbox[t]{\dimexpr\linewidth-\algorithmicindent}{\texttt{Check the if the set $\Phi(\lambda) = \emptyset$ where 
				$\Phi(\lambda)=\bigg\{(\lambda_1,...,\lambda_n):\gamma \geq \sum_{i=1}^n \lambda_iw^i, w^i \in G(s^L,a^{L}), \sum_{i=1}^n \lambda_i=1, \lambda_i \geq 0 \bigg\}$
		}}\EndIndent	
		\If{\texttt{$\Phi(\lambda)=\emptyset$}}
		\State{\texttt{PairwiseDominance=FALSE; break;}}
		\EndIf
		\EndFor
	\end{algorithmic}
	\label{pairwise}
\end{algorithm}

We remark that we also can determine the dominance on primal space $X^L$ by checking if $NCo(G(s^L, a^{L}))\cap NCo(G(s^L, a^{L'})) \neq \emptyset$ in the dual space, given $s^L \in S^L$. See Proposition \ref{NotIFFProp}. Conversely, however, if $\bar{v}^{a^L}_t(s^L, x^L)$ and $\bar{v}^{a^{L'}}_t(s^L, x^L)$ intersect on $X^L$, $NCo(G(s^L, a^{L}))\cap NCo(G(s^L, a^{L'}))$ could be empty in the dual space. A counterexample is given Fig. \ref{NotIFF}. 
\begin{proposition}
	$\forall s^L \in S^L$, if $NCo(G(s^L, a^{L})) \cap NCo(G(s^L, a^{L'})) \neq \emptyset$, then $\bar{v}^{a^L}_t(s^L, x^L)$ and $\bar{v}^{a^{L'}}_t(s^L, x^L)$ intersect on $X^L$.
	\label{NotIFFProp} 
\end{proposition}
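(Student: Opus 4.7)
The plan is to exploit the dual correspondence established in Lemma \ref{geometricLemma}, combine it with the obvious inclusion $NCo(\cdot)\subseteq Co(\cdot)$, and finish with a continuity/connectedness argument on the simplex $X$. The hypothesis gives us a single point $\omega^{*}$ lying in both negative convex hulls, and the goal is to manufacture from $\omega^{*}$ two belief points at which the two piecewise linear concave functions compare in opposite directions, so that they must coincide somewhere in between.

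First, I would invoke Lemma \ref{geometricLemma} on $G(s^{L},a^{L})$ to extract an $x^{*}\in X$ with $\bar{v}^{a^{L}}_{t}(s^{L},x^{*})=x^{*}\omega^{*}$. Since $\omega^{*}\in NCo(G(s^{L},a^{L'}))\subseteq Co(G(s^{L},a^{L'}))$, I can write $\omega^{*}=\sum_{i}\lambda_{i}\gamma_{i}$ with $\gamma_{i}\in G(s^{L},a^{L'})$, $\lambda_{i}\geq 0$, $\sum_{i}\lambda_{i}=1$. Then
\begin{align*}
x^{*}\omega^{*}=\sum_{i}\lambda_{i}\,x^{*}\gamma_{i}\;\geq\;\min_{i} x^{*}\gamma_{i}\;\geq\;\bar{v}^{a^{L'}}_{t}(s^{L},x^{*}),
\end{align*}
so $\bar{v}^{a^{L}}_{t}(s^{L},x^{*})\geq \bar{v}^{a^{L'}}_{t}(s^{L},x^{*})$. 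By the symmetric application of Lemma \ref{geometricLemma} to $G(s^{L},a^{L'})$, I obtain $x^{**}\in X$ with $\bar{v}^{a^{L'}}_{t}(s^{L},x^{**})\geq \bar{v}^{a^{L}}_{t}(s^{L},x^{**})$.

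Second, I would close out with an intermediate value argument. If $x^{*}=x^{**}$, the two inequalities force equality and the intersection is immediate. Otherwise, define $g(x)=\bar{v}^{a^{L}}_{t}(s^{L},x)-\bar{v}^{a^{L'}}_{t}(s^{L},x)$ on the segment $\{(1-\alpha)x^{*}+\alpha x^{**}:\alpha\in[0,1]\}\subseteq X$, which lies in $X$ by convexity of the simplex. The function $g$ is continuous because each $\bar{v}^{a^{L}}_{t}(s^{L},\cdot)$ is a minimum of finitely many linear functions, and $g(x^{*})\geq 0\geq g(x^{**})$. The intermediate value theorem delivers a belief at which $g=0$, i.e., an intersection point on $X$.

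The only subtle step is the first one, where the inequality $x^{*}\omega^{*}\geq \bar{v}^{a^{L'}}_{t}(s^{L},x^{*})$ must be derived correctly; this is where the inclusion $NCo\subseteq Co$ is doing the real work, and it is also the reason why the converse can fail (as Figure \ref{NotIFF} indicates), since an intersection in the primal does not force the negative hulls to share a common point. Everything after that first step is essentially a textbook continuity argument on a compact convex set, so no further difficulty is anticipated.
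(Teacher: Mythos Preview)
Your argument is correct and follows essentially the same route as the paper: pick a point in the common negative convex hull, invoke Lemma~\ref{geometricLemma} to produce a belief where one function dominates the other, and finish with continuity on the connected simplex. The only difference is that the paper obtains the reverse inequality from the standing ``without loss of generality'' assumption $\bar{v}^{a^{L}}_{t}(s^{L},x^{0})\leq \bar{v}^{a^{L'}}_{t}(s^{L},x^{0})$ declared before Proposition~\ref{PairwiseDominanceProp}, whereas you derive it intrinsically by applying Lemma~\ref{geometricLemma} a second time with the roles of $a^{L}$ and $a^{L'}$ swapped; this makes your proof self-contained and independent of that external assumption, at the modest cost of one extra sentence.
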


\proof{}
Pick $\forall \gamma^* \in NCo(G(s^L, a^{L}))\cap NCo(G(s^L, a^{L'}))$. Let $x^* \in X^L$ such that $\bar{v}^{a^{L}}_t(s^L, x^*) = x^*\gamma^*$ per Lemma \ref{geometricLemma}. The definition of $NCo$ and Lemma \ref{geometricLemma} guarantee $\bar{v}^{a^{L'}}_t(s^L, x^*) \leq  x^*\gamma^*$. The result follows by the assumption that $\bar{v}^{a^{L'}}_t(s^L, x^0) \geq  \bar{v}^{a^{L}}_t(s^L, x^0).$ \qedsymbol
\endproof
\begin{figure}[h]
	\centering
	\begin{minipage}{0.46\textwidth}
		\centering
		\includegraphics[width=\textwidth]{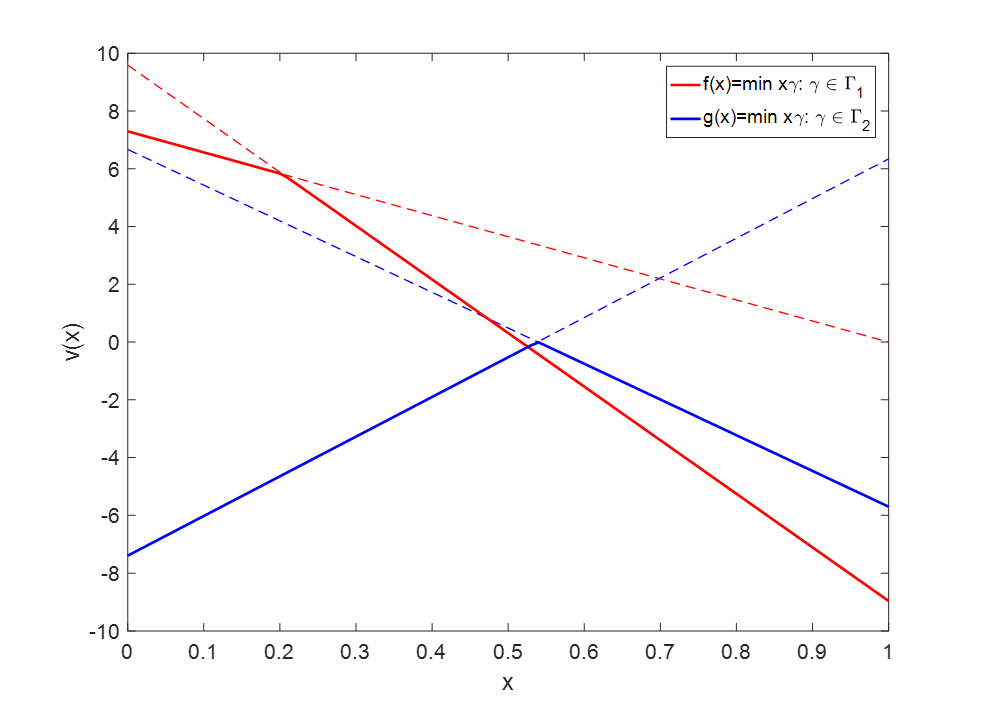}\\
		(a) primal space
	\end{minipage}
	\begin{minipage}{0.46\textwidth}
		\centering
		\includegraphics[width=\textwidth]{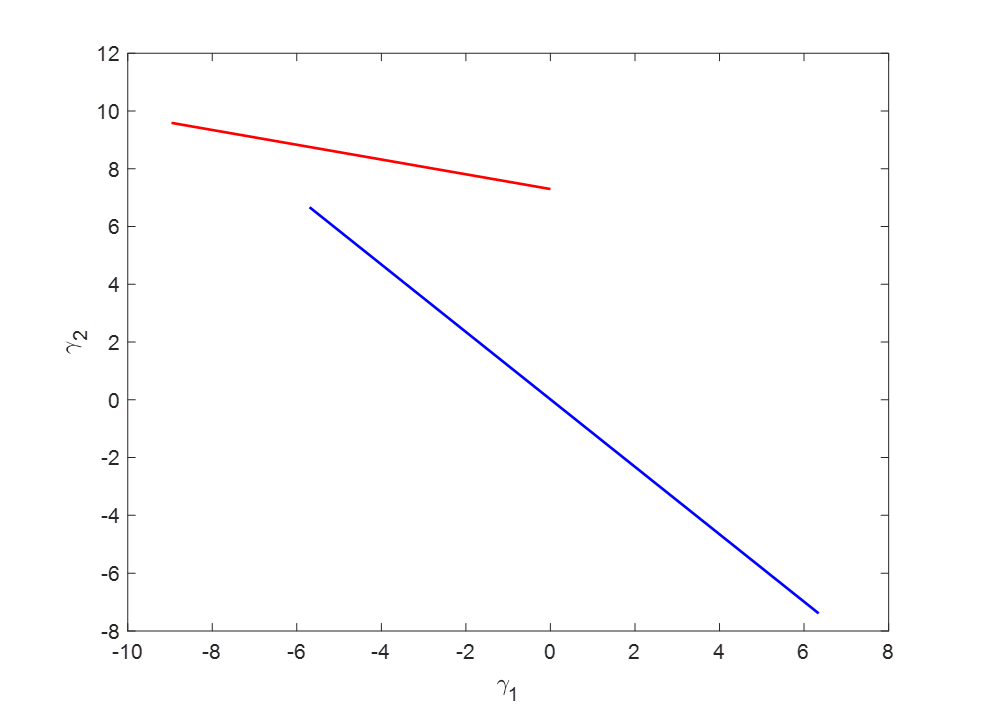}\\
		(b) dual space
	\end{minipage}\hfil
	\caption{The intersection in the (a) primal space does not imply the intersection in the (b) dual space}
	\label{NotIFF}
\end{figure}

Determining the superset $\Gamma^c(s^L)$ on the basis of pairwise dominance needs to consider each pair of action $a^L, a^{L'} \in A^L$ (the pseudocode is summarized in Algorithm \ref{geometricalgorithm2}). For each $s^L \in S^L$, the program initializes $\Gamma^c(s^L)$ with the set $G(s^L,a^{L,*})$, where $a^{L,*} \in \arg\max_{a^L \in A^L}\bar{v}^{a^L}_t(s^L,x^0)$ and $x^0 \in X^L$ is randomly generated. Let $\Gamma^c(s^L,k_1) = \{\gamma^{k_1',k_2'}: \gamma^{k_1',k_2'} \in \Gamma^c(s^L), k_1' = k_1 \}$, $K_1(s^L)$ be the number of $\Gamma^c(s^L,k_1)$ sets in $\Gamma^c(s^L)$, and $K_2(s^L,k_1)$ be the number of $\gamma$-vectors in $\Gamma^c(s^L,k_1)$. The algorithm updates $\Gamma^c(s^L), K_1(s^L)$, and $K_2(s^L,k_1)$ by the following: For each candidate set $G(s^L,a^L)$, the algorithm compares it with the existing sets in $\Gamma^c(s^L)$. If $G(s^L,a^L)$ is pair-wise dominated by an existing set $\Gamma^c(s^L,k_1)$, then $G(s^L,a^L)$ will not be considered; Otherwise, $G(s^L,a^L)$ will be included in $\Gamma^c(s^L)$ and any existing sets $\Gamma^c(s^L,k_1)$ that are dominated by $G(s^L,a^L)$ will be eliminated from $\Gamma^c(s^L)$. Meanwhile, $K_1$ and $K_2$ are updated accordingly. 
\begin{algorithm}[h] 
	\caption{Determining the Superset $\Gamma^c(s^L) \supseteq \Gamma(s^L)$.}	
	\begin{algorithmic}\linespread{0.83}\selectfont		
		\For{\texttt{each $s^L \in S^L$}}
		\State\parbox[t]{\dimexpr\linewidth-\algorithmicindent}{{\texttt{Randomly generate $x^0 \in X^L$ and calculate $\bar{v}^{a^L}(s^L,x^0) = \min\{x^0\gamma: \gamma \in G(s^L,a^L) \}$ for each $a^L$. Select $a^{L,*} \in \arg\max_{a^L}\bar{v}^{a^L}(s^L,x^0) $. Let $\Gamma^c(s^L,1) = G(s^L,a^{L,*})$, $K_1(s^L)=1$, $K_2(s^L,1)=|G(s^L,a^{L,*})|$.\\}}}
		\For{\texttt{each $a^L \in A^L$, $a^L \neq a^{L,*}$}}
		\State\parbox[t]{\dimexpr\linewidth-\algorithmicindent}{\texttt{Randomly generate $x' \in X^L$, and calculate:
				\begin{align*}
				\bar{v}^{k_1}(s^L,x') &= \min\{x'\gamma: \gamma \in \Gamma^c(s^L,k_1) \}, \forall k_1 = 1,...,K_1;\\
				\bar{v}^{a^L}(s^L,x') &= \min\{x'\gamma: \gamma \in G(s^L,a^{L}) \}.
				\end{align*}}}
		\State{\texttt{Set IsDominance = FALSE.}}
		\For{ \texttt{each $k_1$ s.t. $\bar{v}^{k_1}(s^L,x') \geq \bar{v}^{a^L}(s^L,x')$}}
		\If{\texttt{$G(s^L,a^{L})$ is pair-wise dominated by $\Gamma^c(s^L,k_1)$}}
		\State{\texttt{Set IsDominance = TRUE; break.}}
		\EndIf
		\EndFor
		\If{\texttt{IsDominance = FALSE}}
		\State\parbox[t]{\dimexpr\linewidth-\algorithmicindent}{\texttt{$\Gamma^c(s^L)=\Gamma^c(s^L) \cup G(s^L,a^L)$, $K_1(s^L)=K_1+1$, $K_2(s^L, K_1)=|G(s^L,a^L)|$.}}
		\For{ \texttt{each $k_1$ s.t. $\bar{v}^{k_1}(s^L,x') <\bar{v}^{a^L}(s^L,x')$}}
		\If{\texttt{$\Gamma^c(s^L,k_1)$ is dominated by $G(s^L,a^L)$}}
		\State\parbox[t]{\dimexpr\linewidth-\algorithmicindent}{\texttt{$\Gamma^c(s^L) = \Gamma^c(s^L) \setminus \Gamma^c(s^L,k_1)$, $K_1=K_1-1$,delete $K_2(s^L,k_1)$.}}
		\EndIf
		\EndFor
		\EndIf
		\EndFor
		\EndFor 
		\State{\texttt{Return $\Gamma^c, K1, K2$.}}
	\end{algorithmic}
	\label{geometricalgorithm2}
\end{algorithm}

\subsection{Determine the Set $\Gamma_{t}(s^L)$}
We now determine the set $\Gamma_{t}(s^L)$ by further removing the dominated sets from $\Gamma_{t}^c(s^L)$. Assume $s^L$ is given. $\forall x^L \in X^L$, let $z_1(x^L)$ be the function value attained by the superset $\Gamma_{t}^c(s^L)$, $z_1(x^L) = \max_{k_1}\min_{k_2}\{x^L\gamma^{k_1,k_2}:\gamma^{k_1,k_2} \in \Gamma_{t}^c(s^L) \}$, and $z_2(x^L)$ be the value of function $\bar{v}^{k_1}$ attained by the set $\Gamma^c(s^L,k_1)$, $z_2(x^L) = \min\{x^L\gamma: \gamma \in \Gamma^c(s^L,k_1) \}$. Let DOMINANCE\_MIP($\Gamma^c(s^L,k_1), \Gamma_{t}^c(s^L)$) determine whether the set $\Gamma^c(s^L,k_1)$ is a dominated set, which can be evaluated via the following mixed integer program \eqref{MIP}: 
\begin{align}
u \equiv \min \hspace{15pt}         & z_1 - z_2    \nonumber \\
\mathrm{s.t.}\hspace{15pt}  &  z_2 \leq x\gamma, \gamma \in \Gamma_{t}^c(s^L,k_1);\nonumber \\
%&    z_1 \leq x\gamma_j^1+M(1-y_j), \gamma_j^1 \in \Gamma^1; \nonumber \\
&     -z_1 \leq -x\gamma^{k_1,k_2}+M(1-\rho^{k_1,k_2}), \gamma^{k_1,k_2} \in \Gamma_{t}^c(s^L); \nonumber\\
& \sum_{k_2=1}^{K_2(s^L,k_1)}\rho^{k_1,k_2}=1, \forall k_1 = 1,...,K_1(s^L);\label{MIP}\\
& \rho^{k_1,k_2} \in \{0,1\}, x \in X^L, z_1, z_2 \in R, \nonumber
\end{align}
where $M$ is a large positive number. 

The objective function is to find the minimal gap between the two functions, $z_1(x^L)$ and $z_2(x^L)$. As $z_2(x^L)$ is a piecewise linear and concave function on $X^L$, it can be easily determined by the first constraint. The second and the third constraints define $z_1$. For the purpose of explanation, $\forall k_1 \in K_1(s^L)$, we further introduce a variable $\eta^{k_1}$ as the minimum value attained by set $\Gamma_{t}^c(s^L,k_1)$, i.e., $\eta^{k_1}(x^L) = \min\{x^L\gamma: \gamma \in \Gamma_{t}^c(s^L,k_1)\}$. Then, (i) $\eta^{k_1} \geq x^L\gamma^{k_1,k_2} - M(1-\rho^{k_1,k_2})$ and the multiple-choice constraint on $\rho^{k_1,k_2}$s ensure that there is exactly one $\gamma \in \Gamma_{t}^c(s^L,k_1)$ selected to define $\eta^{k_1}$; (ii) $z_1 \geq \eta^{k_1}, \forall k_1$, by the definition of $z_1$. Hence, the combination of (i) and (ii) leads to the second constraint and variables $\eta^{k_1}$s can be omitted. The last equation ensures that the belief states are in a nonnegative simplex.  

Clearly, if the objective value $u > 0$, then $\Gamma_{t}^c(s^L,k_1)$ is not a supporting set for $\bar{v}_t^L$ and should be eliminated. We need to solve $K_1$ number of MIPs to finalize $\Gamma_{t}(s^L)$. The pseudocode for determining $\Gamma_{t}(s^L)$ from its superset $\Gamma_{t}^c(s^L)$ is presented in Algorithm \ref{MIPDominancealgorithm}. 

\begin{algorithm}[h]
	\caption{Determining $\Gamma_{t}(s^L)$ from its Superset $\Gamma_{t}^c(s^L)$.}
	\begin{algorithmic}\linespread{0.83}\selectfont		
		\For{\texttt{each $s^L \in S^L$}}
		\State{\texttt{Set $\Gamma_{t}(s^L)=\Gamma_{t}^c(s^L)$.}}
		\For{\texttt{each $ k_1 \in K_1(s^L)$}}
		\State \parbox[t]{\dimexpr\linewidth-\algorithmicindent}{\texttt{determine 
				$u=$DOMINANCE\_MIP($\Gamma_{t}^c(s^L,k_1), \Gamma_{t}(s^L))$).} }
		\If{\texttt{$u > 0$}} 
		\State \texttt{$\Gamma_{t}(s^L) = \Gamma_{t}(s^L) \setminus \Gamma_{t}^c(s^L,k_1)$.}
		\EndIf
		\EndFor
		\EndFor
		\State \texttt{Return $\Gamma_t$.} 
	\end{algorithmic}
	\label{MIPDominancealgorithm}
\end{algorithm} 

\section{Piecewise Linear Concave Approximation}
\label{PiecewiseLinearConcaveApproximation}
Given a set of $\gamma$-vectors $\Gamma_t$, the value function $\bar{v}_t^L(s^L,x^L) = \max_{k_1}\min_{k_2} \{x^L\gamma^{k_1, k_2}: \gamma^{k_1, k_2} \in \Gamma_t(s^L) \}$ is piecewise linear but not concave. The iterative algorithm we developed requires a piecewise linear and concave function for the next iteration. We thus approximate $\bar{v}_t^L$ by a function $\tilde{v}_t^L$ satisfying the following conditions:
\begin{enumerate}[(i)]
	\item $\forall s^L \in S^L, \tilde{v}_t^L(s^L, x^L)$ is a piecewise linear and concave on $X^L$;
	\item  $\tilde{v}_t^L(s^L, x^L) \leq\bar{v}_t^L(s^L, x^L), \forall s^L \in S^L, x^L \in X^L$;
	\item  the distance between $\tilde{v}_t^L$ and $\bar{v}_t^L$ is as small as possible, where we define the distance between two bounded functions $v^1, v^2 \in V$ as $$d(v^1, v^2)(s^L) = \max_{x^L\in X^L}|v^1(s^L,x^L)-v^2(s^L,x^L)|.$$
\end{enumerate}
Equivalently, for each $s^L$, we want to determine a set $\tilde{\Gamma}_t(s^L)$ so that $\tilde{v}_t^L(s^L, x^L) = \min\{ x^L\gamma: \gamma \in \tilde{\Gamma}(s^L) \}$ satisfies conditions (ii) and (iii). For computational efficiency, we consider the case where $\tilde{\Gamma}_t(s^L) \subset \Gamma_t(s^L)$ in this paper. We do acknowledge that $\tilde{v}_t^L(s^L, x^L)$ may be further improved by constructing $\gamma \notin \Gamma_t(s^L)$ for some instances. Determining a general procedure for finding the best piecewise linear and concave approximation of an arbitrary piecewise linear function is an interesting research topic for the future. Furthermore, an advantage of selecting $\tilde{\Gamma}_t(s^L) \subset \Gamma_t(s^L)$ is that each $\gamma \in \tilde{\Gamma}_t(s^L)$ is still associated with an action pair $a=(a^L,a^F)$. Thus, it is easy to explain and implement the policy associated with the lower bound $\tilde{v}_t^L(s^L, x^L)$. 

We remark that the maximal gap between two functions $v^1, v^2 \in V$ for a given $s^L \in S^L$ must occur at (i) where two segments of $v^1$ (or $v^2$) intersect, or (ii) extreme points of $X^L$. Thus, $\forall s^L \in S^L$, we could determine the set $\tilde{\Gamma}_t(s^L)$ satisfying conditions (ii) and (iii) by a finite set of belief points $W \subset X^L$. Given $s^L \in S^L$, the pseudocode of determining $\tilde{\Gamma}_t(s^L)$ is outlined in Algorithm \ref{PLCA}. 

\begin{algorithm}[h]
	\caption{Approximating $\bar{v}_t^L(s^L, x^L)$ by $\tilde{v}_t^L(s^L, x^L)$}
	\centering
	\begin{algorithmic}\linespread{0.83}\selectfont		
		\State \texttt{Step 1: Initialize $W_0$ by including the following two groups:
			\begin{enumerate}
				\item[] \textit{Extreme points}: extreme points of $X^L$ are $e_i, i \in S^F$, whose $i^{th}$ entry is 1; 0 elsewhere. Evaluate $\bar{v}_t^L(s^L,e_i) = \max_{k_1}\min_{k_2}\{e_i\gamma^{k_1,k_2}: \gamma^{k_1,k_2} \in \Gamma_t(s^L) \}$.
				\item[] \textit{Witness points:} the PURGE operation has identified at least a witness point $w^i$ for each $\gamma^i \in \Gamma_t(s^L)$. Evaluate $\bar{v}_t^L(s^L,w^i) = \max_{k_1}\min_{k_2}\{w^i\gamma^{k_1,k_2}: \gamma^{k_1,k_2} \in \Gamma_t(s^L) \}$.
				\item[] Let $N = |W_0|$. 
		\end{enumerate}}
		
		\State \parbox[t]{\dimexpr\linewidth-\algorithmicindent}{	\hangindent=10pt{\texttt{Step 2: Construct the concave approximation set $\tilde{\Gamma}_n(s^L)$ by the concave approximation MIP \eqref{ApproximationMIP} on the set $W_n$.\\} }}
		\State \parbox[t]{\dimexpr\linewidth-\algorithmicindent}{	\hangindent=10pt{\texttt{Step 3: Check if the condition (ii) is satisfied on $X^L$ by the verification MIP \eqref{ApproximationMIPCheck}. If the condition (ii) is violated, the verification MIP will return an $x^* \in X^L$ with $\tilde{v}_t^L(s^L, x^*) > \bar{v}_t^L(s^L, x^*)$. Add $x^*$ to the set $W_n$.\\} }}
		\State \parbox[t]{\dimexpr\linewidth-\algorithmicindent}{	\hangindent=10pt{\texttt{Step 4: Evaluate the maximal gap between $\tilde{v}_t^L(s^L, x^L)$ and $\bar{v}_t^L(s^L, x^L)$ by the MIP \eqref{BoundErrorMIP}. If the maximal gap $\epsilon(s^L)$ is positive at point $x' \in X^L$, $W_{n+1} = W_n \cup \{x'\}$ and update $N=|W_{n+1}|$.\\}}}
		\State \parbox[t]{\dimexpr\linewidth-\algorithmicindent}{	\hangindent=10pt{\texttt{Step 5: Go to Step 2 and update the concave approximation set $\tilde{\Gamma}_{n+1}(s^L)$ on the set $W_{n+1}$. The program stops if $W_{n+1} = W_n$. The difference between $\tilde{v}_t^L(s^L, x^L)$ and $\bar{v}_t^L(s^L, x^L)$ is bounded by $\epsilon(s^L)$. \\}}}
	\end{algorithmic}
	\label{PLCA}
\end{algorithm}

We initialize the set $W_0$ in Step 1, by including the extreme points of the belief space $X^L$ and at least a witness point for each $\gamma$-vector in $\Gamma_t(s^L)$. These witness points are generated by the PURGE operation discussed in Section \ref{PurgeOperationSection}. We develop a concave approximation MIP in Step 2 to construct an initial set $\tilde{\Gamma}_t(s^L)$ based on $W_n$. As the condition (ii) is only enforced on the set $W_n$ in Step 2, Step 3 further determines if the condition (ii) is violated on $X^L$. If there is an $x^* \in X^L$ at which $\tilde{v}_t^L(s^L, x^*) > \bar{v}_t^L(s^L,x^*)$, we update $W_n$ by including $x^*$. Step 4 determines $\epsilon(s^L)$, the maximal distance between the $\tilde{v}_t^L(s^L, x)$ and $\bar{v}_t^L(s^L,x)$. To improve the approximation quality and reduce the gap between $\bar{v}_t^L$ and $\tilde{v}_t^L$, we also add the belief point at which the maximal distance is attained to the new set $W_{n+1}$. The program continues to update $\tilde{\Gamma}_t(s^L)$ based on $W_{n+1}$. The entire procedure stops when no further improvement is identified. When it stops, the condition (ii) is guaranteed on $X^L$ and the maximal distance between $\bar{v}(s^L)$ and its approximate value $\tilde{v}(s^L)$ is bounded above by $\epsilon(s^L)$. We now detail each step in the following subsections.

\subsection{Concave Approximation on $W$}
\label{SecConcaveApproximationMIP}
Assume $s^L \in S^L$ is given. Let $W=\{x^i\}$ be a set of belief points in $X^L$, $N=|W|$, $\tilde{z}^i$ be the maximum function values attained at $x^i$ by the set $\tilde{\Gamma}_t(s^L)$, and $z^i$ be the function values attained at $x^i$ by the set $\Gamma_t(s^L)$, i.e., $\tilde{z}^i = \tilde{v}_t^L(s^L,x^i) = \min\{x^i\gamma: \gamma \in \tilde{\Gamma}_t(s^L)\}$, and $z^i=\bar{v}_t^L(s^L,x^i) = \max_{k_1}\min_{k_2}\{x^i\gamma^{k_1,k_2}: \gamma^{k_1,k_2} \in \Gamma_t(s^L)\}$. 

For each $\gamma^{k_1,k_2} \in \Gamma_t(s^L)$, define a binary variable $y^{k_1,k_2} = 1$ if $\gamma^{k_1,k_2} \in \tilde{\Gamma}_t(s^L)$ and $y^{k_1,k_2} = 0$ if $\gamma^{k_1,k_2} \notin \tilde{\Gamma}_t(s^L)$. Thus, $\tilde{\Gamma}_t(s^L) = \{\gamma^{k_1, k_2}: \gamma^{k_1,k_2} \in \Gamma_t(s^L), y^{k_1,k_2}=1 \}$. Let $g$ be the distance between $\bar{v}_t^L(s^L,x^L)$ and $\tilde{v}_t^L(s^L,x^L)$ on the set $W$, that is, $g = \max_i z^i - \tilde{z}^i$. With the aid of additional binary variables for evaluating $\tilde{z}^i$, we seek the set $\tilde{\Gamma}_t(s^L)$ by the following mixed integer program \eqref{ApproximationMIP}:
\begin{align}
min \hspace{15pt}         & Ng-\sum_{i} \tilde{z}^i    \nonumber \\
\mathrm{s.t.}\hspace{15pt}  &  \tilde{z}^i \leq x^i\gamma^{k_1,k_2} + M(1-y^{k_1,k_2}), \gamma^{k_1,k_2} \in \Gamma_t(s^L),  \forall i;\nonumber \\
% & \tilde{z}^i \leq x^i \gamma^{k_1, k_2} + M(1- \eta_i^{k_1, k_2}) \nonumber\\
& -\tilde{z}^i \leq -x^i \gamma^{k_1, k_2} + M(1- \eta_i^{k_1, k_2}), \forall i, k_1, k_2; \nonumber\\
& \sum_{k_1}\sum_{k_2} \eta_i^{k_1,k_2} = 1, \forall i; \nonumber\\
& \eta_i^{k_1,k_2} \leq y^{k_1, k_2}, \forall i, k_1, k_2;\label{ApproximationMIP}\\
&  1 \leq \sum_{k_1=1}^{|K_1|}\sum_{k_2=1}^{|K_2|}y^{k_1,k_2} \leq |\Gamma|-1; \nonumber \\
& \tilde{z}^i \leq z^i, \forall i; \nonumber\\
& g \geq z^i - \tilde{z}^i, \forall i; \nonumber\\
& y^{k_1,k_2}, \eta_i^{k_1, k_2} \in \{0,1\}, \tilde{z}^i,g \in R, \nonumber
\end{align}
where $M$ is a large positive number. 

Minimizing the distance between $\bar{v}_t^L$ and $\tilde{v}_t^L$ (on $W$) is equivalent to minimize the maximal gap $g$. The expression $-\sum_{i \in I} \tilde{z}^i$ is added to the objective function in order to close the gap on $W$. The multiplier $N$ on $g$ is to ensure that the two quantities are within the same magnitude. The first to the fourth constraints compute $\tilde{z}^i, \forall i$. Specifically, the first constraint ensures that $\tilde{z}^i$ is bounded above by the approximation function constructed by $\tilde{\Gamma}_t(s^L)$. Each binary variable $\eta_i^{k_1,k_2}$ is associated with a $\gamma$-vector in $\Gamma_t(s^L)$ and a belief point $x^i$. The second and the third constraints are necessary to guarantee that $\forall x^i \in X^L$, there exists one and only one defining vector $\gamma^{k_1,k_2} \in \tilde{\Gamma}_t(s^L)$ such that $\tilde{z}^i = x^i \gamma^{k_1,k_2}$. The fourth constraint implies that if $\gamma^{k_1,k_2}$ satisfies $\tilde{z}^i = x^i \gamma^{k_1,k_2}$, then $\gamma^{k_1,k_2} \in \tilde{\Gamma}_t(s^L)$. The fifth constraint is based on the observation that $\min_{k_1}\min_{k_2}\{x^L\gamma^{k_1,k_2}: \gamma^{k_1,k_2} \in \Gamma_t(s^L)\} \leq \min_{k_2}\{x^L\gamma^{k_1,k_2}: \gamma^{k_1,k_2} \in \Gamma_t(s^L)\}, \forall k_1$, hence, $\tilde{\Gamma}_t(s^L) \subsetneq \Gamma_t(s^L)$. The second to the last constraint guarantees that $\tilde{v}_t^L(s^L,x^L) \leq \bar{v}_t^L(s^L,x^L)$ on $W$ and the last constraint determines the maximal gap between $\tilde{v}_t^L(s^L,x^L)$ and $\bar{v}_t^L(s^L,x^L)$ on $W$.  

We can enhance the performance of the MIP \eqref{ApproximationMIP} by providing a good feasible solution exploiting the structure results of $\bar{v}_t^L$. Note that for any given $k_1$, $\bar{v}_t^{k_1}$ computed by the set $\Gamma_t(s^L,k_1)$ is a lower bound to $\bar{v}_t^L(s^L,x^L)$ and satisfies all three conditions. Pick any $w \in W$. Let $z_w^{k_1} = \min \{w\gamma: \gamma \in \Gamma_t(s^L,k_1)\}$ and $k'_1 \in \arg\max_{k_1}z^{k_1}_w$. Then $\Gamma_t(s^L,k'_1)$ is a feasible solution to the MIP \eqref{ApproximationMIP}. Determining such initial solutions is straightforward and computationally inexpensive.  
\subsection{Verification on $X^L$}
The concave approximation MIP \eqref{ApproximationMIP} only ensures that the  condition (ii) is satisfied on $W\subsetneq X^L$. The following mixed integer program \eqref{ApproximationMIPCheck} further checks whether the condition is satisfied on $X^L$: 
\begin{align}
\mu^* \equiv min \hspace{15pt}         & z_1-z_2   \nonumber \\
\mathrm{s.t.}\hspace{15pt}  &  z_2 \leq x\gamma, \gamma \in \tilde{\Gamma}_t(s^L);\nonumber \\
%& y^{k_1} \leq x \gamma^{k_1, k_2},  \nonumber\\
%& y^{k_1} \leq x \gamma^{k_1, k_2} + M(1- \rho^{k_1, k_2}); \nonumber\\
& -z_1 \leq -x \gamma^{k_1, k_2} + M(1- \rho^{k_1, k_2}), \gamma^{k_1,k_2} \in \Gamma_t(s^L); \nonumber\\
& \sum_{k_2=1}^{K_2(s^L,k_1)}\rho^{k_1,k_2} = 1, \forall k_1; \label{ApproximationMIPCheck}\\
& \rho^{k_1,k_2} \in \{0,1\}, z_1,z_2 \in R, x \in X^L, \nonumber
\end{align}
where $M$ is a large positive number. 

The objective function is to minimize the difference between the two functions for a given $s^L \in S^L$: $z^1(x^L)=\max_{k_1}\min_{k_2}\{x^L\gamma^{k_1,k_2}: \gamma^{k_1,k_2} \in \Gamma_t(s^L)\}$ and its approximation $z^2(x^L)=\min\{x^L\gamma: \gamma \in \tilde{\Gamma}_t(s^L)\}$. Thus, MIP \eqref{ApproximationMIPCheck} is the same as MIP \eqref{MIP} where: (i) the value $z_2$ is determined by the first constraint, and (ii) the second and the third constraints and the binary variable $\rho^{k_1,k_2}$ associated with each vector $\gamma^{k_1,k_2} \in \Gamma_t(s^L)$ determine $z_1$.

If $\mu^* < 0$ at the belief state $x^* \in X^L$, then $x^*$ should be added to $W$, and both of the MIPs \eqref{ApproximationMIP} and \eqref{ApproximationMIPCheck} should be resolved. The process should continue until $\mu^* \geq 0$.     

\subsection{Approximation Error}
\label{ApproximationError}
We now determine $\epsilon(s^L)$, the maximal difference between $z_1(x^L)$ based on $\Gamma_{t}(s^L)$ and its approximation $z_2(x^L)$ based on $\tilde{\Gamma}_{t}(s^L)$, by the following MIP \eqref{BoundErrorMIP}:
\begin{align}
\epsilon(s^L)  \equiv max \hspace{15pt}         & z_1-z_2   \nonumber \\
\mathrm{s.t.}\hspace{15pt}  &  z_1 \leq x\gamma^{k_1,k_2}+M(1-y^{k_1}), \forall k_1, k_2;\nonumber \\
& \sum_{k_1=1}^{K_1}y^{k_1} =1; \nonumber\\
%& z_1 \leq x \gamma^{k}, \gamma^k \in \Gamma (may not necessary); \nonumber\\
& -z_2 \leq -x \gamma^{k} + M(1- \rho^{k}), \gamma^k \in \tilde{\Gamma}_{t}(s^L);  \label{BoundErrorMIP}\\
& \sum_{k}\rho^{k} = 1;\nonumber\\
& y^{k_1}, \rho^k \in \{0,1\}, z_1,z_2 \in R, x \in X^L, \nonumber
\end{align}
where $M$ is a large positive number. 

The objective function is to find the maximal gap between $z_1(x^L)$ and $z_2(x^L)$. The first two constraints compute $z_1(x^L)$ on the basis of $\Gamma_t(s^L)$. As $z_1(x^L) = \max_{k_1}v^{k_1}(s^L,x^L)$ for a given $s^L$, the binary variable $y^{k_1}$ for each $k_1$ and the multiple-choice constraint on $y^{k_1}$ ensure that there is exactly one $k_1$ selected to compute $z_1$. Meanwhile, $z_1(x^L) \leq x^L\gamma^{k_1,k_2}, \forall k_2$ for the selected $k_1$. Similarly, the third and the fourth constraints compute $z_2(x^L)$. The binary variable $\rho^k$ associated with each $\gamma$-vector in $\tilde{\Gamma}(s^L)$ and its multiple-choice constraint guarantee that there exists one and only one $\gamma \in \tilde{\Gamma}(s^L)$ defining $z_2(x^L)$. 

The approximation error is bounded above by the objective value $\epsilon(s^L) \geq 0$, assuming at point $x^* \in X^L$. To improve the approximation quality, we also include $x^*$ to update $W$ and $\tilde{\Gamma}(s^L)$. Let $2^{nd}best$ be the second best leader's action for $\min_{k_1} \sum_{x^i \in W} |\bar{v}(s^L,x^i)-\bar{v}^{k_1}(s^L,x^i)| $. The following Proposition shows that this procedure guarantees $\epsilon(s^L) \leq \max_{x \in X^L}\max_{k_1} |\bar{v}^{k_1}(s^L,x^L)-\bar{v}^{2^{nd}best}(s^L,x^L)|$. That is, the approximation function at any belief point is no worse than the performance induced by the leader's second best action (on W). Moreover, the approximation error of the proposed approach could be zero when there is a dominant action of the leader. 

\begin{proposition}
	$\epsilon(s^L) \leq \max_{x \in X^L}\max_{k_1} |\bar{v}^{k_1}(s^L,x^L)-\bar{v}^{2^{nd}best}(s^L,x^L)|$. Furthermore, if there is a leader's action $a^{L,*}$ such that $G(s^L, a^{L,*})$ pairwisely dominates $G(s^L, a^{L}), \forall a^{L} \in A^L, a^L \neq a^{L,*}$, then $\epsilon(s^L) = 0$. 
	\label{CorollaryApproximationErrosisZero}
\end{proposition}

\proof{}
The first result follows from that (i) $\bar{v}^{2^{nd}best}(s^L,x^L)$ is a feasible solution to the MIP (2); (ii) it is a second best minimizer of $\min_{\bar{v}^{k_1}}\sum_{i \in W}[\bar{v}(s^L,w_i)-\bar{v}^{k_1}(s^L,w_i)]$; and (iii) the construction of the set $W$ (Step 4 in Algorithm \ref{PLCA}).
The second result follows as the set $G(s^L, a^{L,*})$ satisfies the conditions (i)-(iii) and the pairwise dominance assumption implies $\tilde{\Gamma}_t(s^L) = \Gamma_{t}(s^L)=G(s^L,a^{L,*})$.
\qedsymbol 
\endproof

Furthermore, let $T:V\rightarrow V$ be the (nonlinear) operator  such that $\forall u \in V$, $Tu$ is the approximation of $u$ satisfying the conditions (i)-(iii), and $\tilde{v}_T^L = Tv^L_T$. At each iteration, Algorithm \ref{overall} evaluates $\bar{v}_t^L = H\tilde{v}^L_{t+1}$ and approximates $\bar{v}_t^L$ by $\tilde{v}_{t}^L=T\bar{v}^L_t$. Thus, $\bar{v}_t^L = (H\circ T)\bar{v}_{t+1}^L. $
\begin{proposition}
	$\tilde{v}^L_t \leq	\bar{v}^L_t \leq v_t^L $ and $||\bar{v}_t^L-\bar{v}_{t-1}^L|| \leq \beta^{T-t}||\bar{v}_T^L-\bar{v}_{T-1}^L||$.
	\label{convergeThm}
\end{proposition} 

\proof{}
$v_t^L \geq \bar{v}_t^L \geq \tilde{v}_t^L$ is obvious by the definition of $T$ and the fact that if $u \leq v, u,v \in V$, then $Hu \leq Hv$. For the second part, note if $||u-v||=\epsilon$, then $u-\epsilon \leq v \leq u+\epsilon$. By the definition of $T$, $Tu-\epsilon \leq Tv \leq v \leq u+\epsilon$. Similarly, we also have $Tv-\epsilon \leq Tu \leq u \leq v + \epsilon$. Thus, $||Tu-Tv|| \leq ||u-v||$. Now, $||\bar{v}_t^L - \bar{v}_{t-1}^L|| = ||(H\circ T)\bar{v}_{t+1}^L - (H \circ T) \bar{v}_t^L|| \leq \beta ||T\bar{v}_{t+1}^L-T\bar{v}_{t}^L|| \leq \beta ||\bar{v}_{t+1}^L-\bar{v}_{t}^L|| \leq ... \leq \beta^{T-t}||\bar{v}_{T}^L-\bar{v}_{T-1}^L||$.
\qedsymbol
\endproof

Thus, if we solve the finite horizon problem for a larger and larger $T$, the lower bound function $\bar{v}_t^L$ will also converge (and will be a lower bound of the fixed point $v^*=Hv^*$). Although we only focus on the finite horizon case in this paper, this result shows that the developed algorithm can also be used to obtain, at least approximately, a lower bound of leader's value function in the infinite planning horizon. We remark that there is no algorithm for the infinite-horizon general-sum POSG in the literature yet. 

\section{A Security Application}
\label{NumericalResults}
In this section, we describe a class of \textit{dynamic resource allocation} problems in security context, where the developed model and solution procedure can be employed. 

Consider a defender with limited defensive resources is protecting a set of critical nodes against an intelligent adversary over time. A node could be a manufacturing/factory site, a computer on a network, or a security checkpoint in an airport. At any time, an adversary can choose to breach the security of any node at certain levels. The objective of the defender is to minimize the number of breaches and losses generated by these violations. However, it can be difficult to quantify the reward structure of the adversary, its value over each node, and its rationality. Moreover, due to limited resources and capabilities, the defender may not be fully aware of the attacker state (e.g., exact locations, attack capabilities); but the defender may infer the attacker's state through reported locations, historical attack records, screening, sensors and detectors, and unstructured text data from social media. On the other hand, the defender's state is also only partially observable to the adversary in many realistic scenarios (e.g., some defensive resources can be camouflaged). While attacking the system, the adversary may adjust its behavior and target based on its updated information on the defensive resource allocation. In order to provide decision support to the defender, we could model such problems as a leader-follower POSG under the worst-case scenario, where the defender is the leader and the adversary is the follower. Furthermore, our solution procedure can determine dynamic defense policies that timely adjust resource allocation on the basis of all available real-time information in order to protect the system. 

Dynamic defense policies are a class of defense policies aligned with the trending philosophy of ``Moving Target Defense" (MTD) (Miehling et al. 2015). Currently, the static configuration and operation of a system has presented adversaries with an incredible advantage as adversaries can take their time to study the system and plan attacks (Department of Homeland Security 2015). The focus of MTD is to \textit{dynamically} change a system in order to shift or reduce the attack surface that can be exploited by adversaries to attack the system (Zhuang et al. 2014).  

Many concrete examples in the dynamic risk assessment and security game literature are in this application area, including cybersecurity, traveling inspection (Ahmadi et al. 2018; Bakir and Kardes 2009; Haskell et al. 2014; Kardes 2014; Lopez et al. 2013; Poolsappasit et al. 2012; Shameli-Sendi et al. 2012; Yang et al. 2014; Wang et al. 2019).  Here, we use the liquid egg production problem presented in Zhang (2013) for illustration. For the completeness of the paper, we briefly restate the security problem in Section \ref{liquidEggProblem}.

\subsection{Problem Description}
\label{liquidEggProblem}
Liquid egg products are widely used by the food service industry and as ingredients in other food products such as mayonnaise and ice cream (United States Department of Agriculture Food Safety and Inspection Service 2015). A deliberate contamination in the liquid egg products by an adversary will breach food safety, leading to excessive morbidity and mortality. Zhang (2013) identified the critical components of a liquid egg production process, including collecting vats, raw material tanks, pasteurization, and finished product tanks (Fig. \ref{liquidprocess}). An unknown adversary may use this system as a toxin delivery vehicle by inserting a toxin (e.g., botulinum) at these components (``targets"). The consequence of such attacks occurred at each component is defined as the number of contaminated packages, and the numerical values of the consequence have been analyzed in literature. 

\begin{figure}[h]
	\centering
	\includegraphics[width=0.75\textwidth]{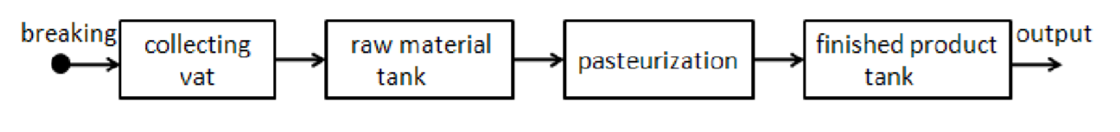}
	\caption{Critical components of the liquid egg product process identified in Zhang (2013)}
	\label{liquidprocess}
\end{figure}

We now illustrate how to use the developed method to support the production manager with limited resources in selecting a sequence of actions for protecting the system against an unknown adversary, in order to maximize the long-run productivity of the production facility. We allow for multiple attacks and each attack can be successful or unsuccessful. An unsuccessful attack occurs when the adversary launches an attack but fails to insert any toxin to the system (e.g., the adversary is caught by the manager during the attack). Thus, the production process will not be affected, and the manager needs to prepare for next possible attacks. After a successful attack, however, the manager has to stop the production process to remove inserted toxin and clean up the system. Thus, the game stops whenever a successful attack occurs. As the pasteurization process can significantly reduce the effectiveness of the botulinum toxin, we assume the manager needs to protect three targets: Collecting Vat (Target 1), Raw Production Tank (Target 2), and the Finished Product Tank (Target 3).

\textit{State spaces, action spaces, and system dynamics}: We assume the manager can only protect one target each time (e.g., visit and inspect one critical component each time). Thus, the state of the manager is the target under protection. The state space of the manager is $S^L = \{\mbox{Target 1 Protected}, \mbox{Target 2 Protected},\mbox{Target 3 Protected},\\\mbox{Attacked}\}$, where the ``Attacked" state indicates toxin has been successfully inserted to the system. The manager decides which target to protect dynamically based on its own information data. 

The state of the adversary is the location of the adversary. Hence, $S^F$= \{Target 1, Target 2, Target 3, Attacked\}, and $|S|=|S^F||S^L|=16$. At Target $i$, the adversary can either attack the target or switch to another target. Thus, there are 3 actions for each agent (9 action pairs) in each state. 

The system transits to a new state once both the agents have selected actions.

\textit{Observation space}: The manager's observations of the adversary are the possible locations of the adversary. Thus, $Z^L=S^F$, $|Z^L| = 4$. We assume that the manager has the ability to detect an attack (e.g., by testing) if the attack has successfully occurred. Specifically, the observation matrix is $P(z^L|s^F)=\epsilon_{s^F,z^L}$, where $0 \leq \epsilon_{s^F,z^L} \leq 1$ and $\sum_{z^L \in Z^L}\epsilon_{s^F,z^L} = 1$. 

\textit{Reward structure, criterion, and objective}: The system can produce $L$ number of qualified packages under normal operations. A successful attack with 2000 grams of botulinum at location $i$ can result in $L^i_{c}$ number of contaminated packages, $i \in$ \{Target 1, Target 2, Target 3\}. The numerical values of $L$ and $L^i_c$ are estimated by the simulation model developed in Zhang (2013). If there is no attack, the reward of the manger $r^L(s,a), s^L \neq \mbox{``Attacked"}$ is its normal productivity $L$. If a successful attack has been detected, no package will be produced because the manager has to stop the production and clean up the system. We assume the manager will receive an additional bonus $b>0$ for successfully preventing an attack. Let $p$ ($q$) be the probability of having a successful attack at a protected (unprotected) target. Assume $0\leq p<<q\leq 1$. The cleanup cost is $C$ for the manager to remove toxin from the system after a successful attack. Thus, 
$$r^L(s,a) = \begin{cases}
L & s^L \neq \mbox{``Attacked"}, a^F \neq \mbox{``attack"};\\
L - p*L^i_c +(1-p)b& a^L=\mbox{``protect target $i$"},  a^F = \mbox{``attack target $i$"},\\
&s^F=i, s^L \neq \mbox{``Attacked"};\\
L - q*L^i_c+(1-q)b & a^L\neq\mbox{``protect target $i$"},  a^F = \mbox{``attack target $i$"},\\
&s^F=i, s^L \neq \mbox{``Attacked"};\\
C & s^L \neq \mbox{``Attacked"},  s^F = \mbox{``Attacked Target $i$"};\\
& a^L = \mbox{``clean the system"};\\
0 & s^L = \mbox{``Attacked"}.
\end{cases}$$   

The criterion of the manager is the expected finite horizon total discounted reward $v^L(\zeta_0^L) =  E\{ \sum_{t=0}^T \beta^t r^L(s_t,a_t)|\zeta_0^L \}$, where we assume $\beta = 0.85$ and $T=30$ for illustrative purpose. The objective of the manager is to maximize the value of criterion under the worst-case scenario. 

\subsection{Numerical Results}

We first use $t=T$ to illustrate the procedure in Algorithm \ref{overall}. Table \ref{vecTable} summarizes the $\gamma$-vectors after the PURGE and DOMINANCE operations for $s^L \neq $``Attacked" at $t=T$. Thus, $v_T^L(s^L,x^L) = \max_{k_1}\min_{k_2}\{x^L\gamma^{k_1,k_2}: \gamma \in \Gamma_T(s^L)\}$. Fig. \ref{numt_0} shows the graph of the true value function $v_T^L$ and its approximation $\tilde{v}_T^L$ projected on the non-absorbing states of the follower (i.e., $s^F \neq $``Attacked"). Clearly, $v_T^L(s^L,x^L)$ (in blue) is not a concave function and $\tilde{v}_T^L(s^L,x)$ (in red) is indeed the best piecewise linear concave approximation function of $v_T^L(s^L,x)$. Let $P \subset X^L$ be the region where the approximation is accurate, i.e., $P=\{x \in X^L: v_T^L(s^L,x^L) = \tilde{v}_T^L(s^L,x^L) \}$. Then $|P|/|X^L| = 78.63\%$ (in terms of the Lebesgue measure), and the maximal approximation error ($4.36\%$) occurs around the extreme point $e_2$. 

Fig. \ref{numt} shows the convergence result of the overall procedure. The maximum deviation of the value function $\bar{v}_{t+1}^L$ from $\bar{v}_{t}^L$ $$dev = \max_{s^L \in S^L}\max_{x^L \in X^L}|\bar{v}_{t+1}^L(s^L,x^L) - \bar{v}_{t}^L(s^L,x^L) |$$ declines as the algorithm proceeds, and the value function $\bar{v}^L$ converges to $\bar{v}^{L,*}$ after 27 iterations. 
\begin{table}[ht]
	\begin{minipage}{0.46\textwidth}
		\centering
		\begin{tabular}{|cc:cc:c|}
			\hline 
			$\gamma^{1,1}$ & $\gamma^{1,2}$ & $\gamma^{2,1}$ & $\gamma^{2,2}$ & $\gamma^{3,1}$\\
			\hline
			916 & 906 & 0 & 756 & 0 \\
			723 & 906 & 916 & 756 & 703 \\
			746 & 906 & 786 & 756 & 726\\
			-100& -100 & -100 & -100 & -100\\
			\hline
		\end{tabular}	
		\vspace{20pt}
		%\captionsetup{labelformat=andtable}
		\caption{$\Gamma_T(s^L)$ where $s^L \neq $``Attacked", in terms of the number of qualified products.}
		\label{vecTable}
	\end{minipage} \hfil
	\begin{minipage}{0.46\textwidth}
		\centering
		\includegraphics[width=0.9\textwidth]{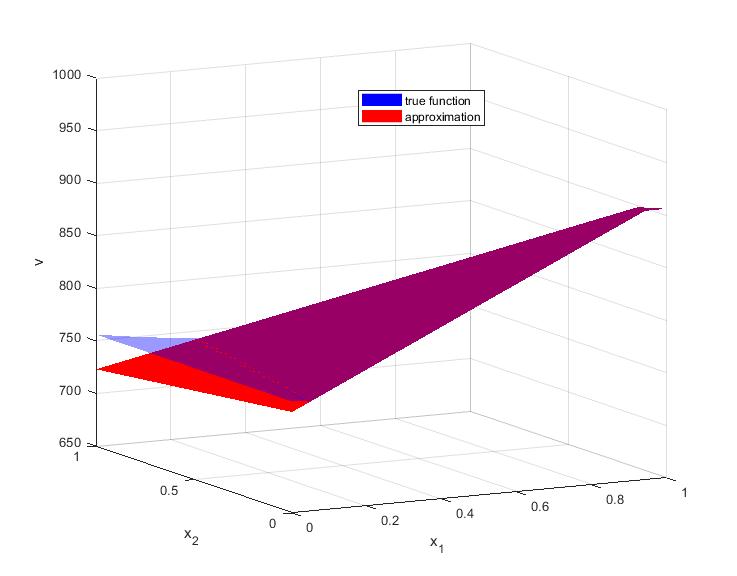}
		\captionof{figure}{The true value function $v_T$ and its concave approximation $\tilde{v}_T$}
		\label{numt_0}
	\end{minipage}
	
\end{table}

\begin{figure}[h]
	\centering
	\begin{minipage}{0.46\textwidth}
		\centering
		\includegraphics[width=\textwidth]{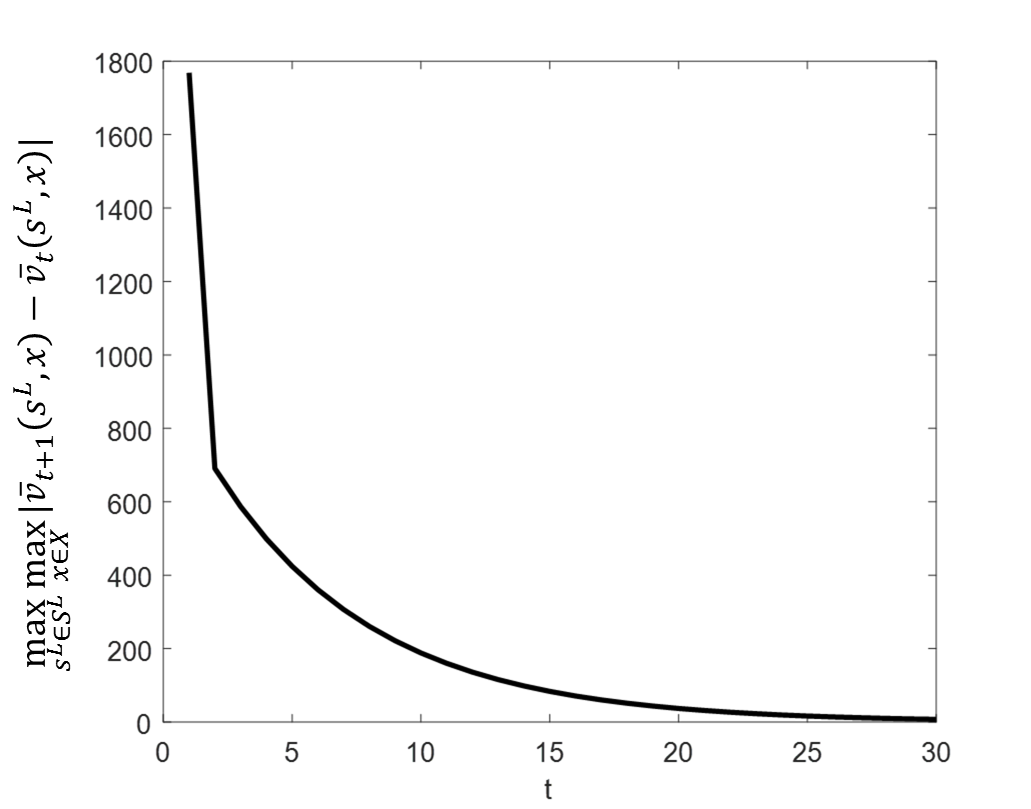}\\	
		(a) The maximum deviation of the value function $\bar{v}_{t+1}^L$ from $\bar{v}^L_t$ decreases to zero as $t$ increases.
	\end{minipage}
	\begin{minipage}{0.46\textwidth}
		\centering
		\includegraphics[width=\textwidth]{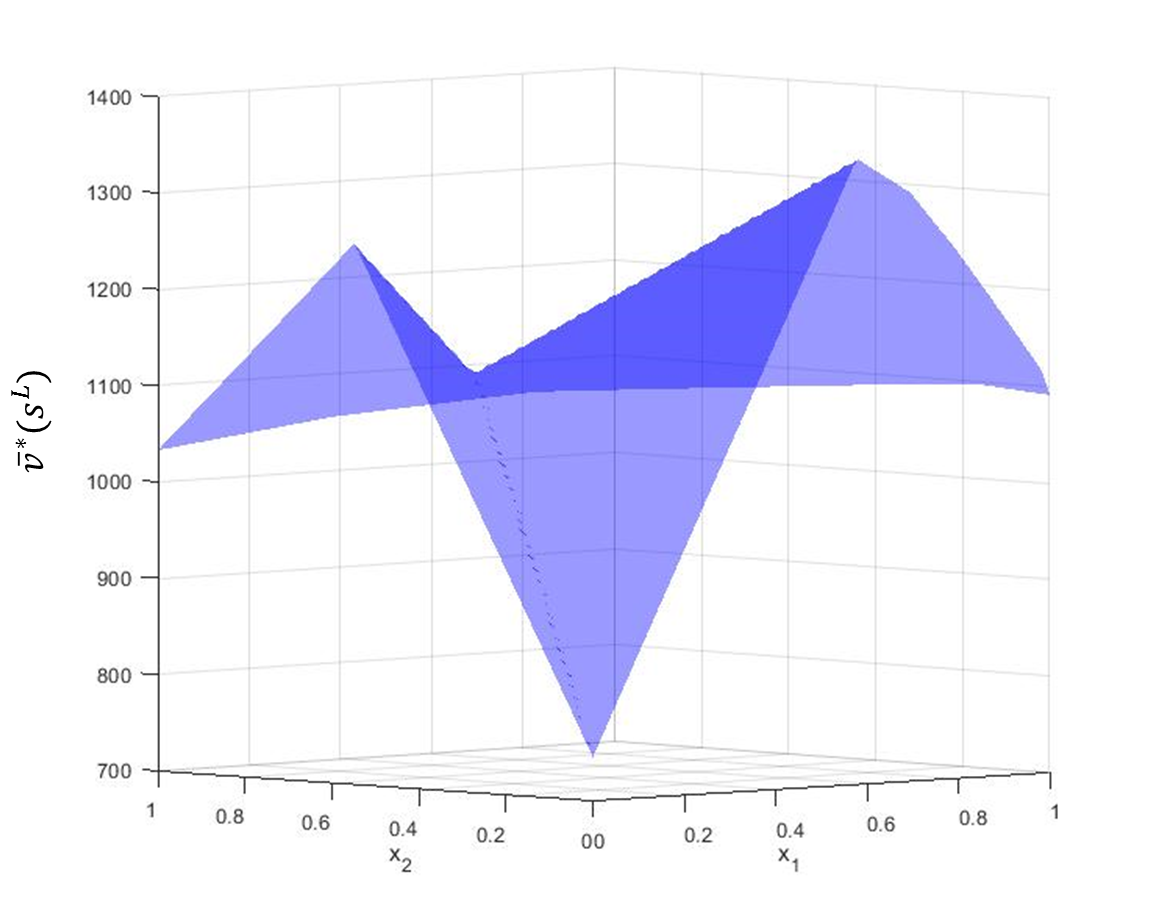}\\
		(b) The (projected) value function $\bar{v}^{L,*}$ on non-absorbing states.
	\end{minipage}\hfil
	\caption{The convergence result of the proposed algorithm.}
	\label{numt}
\end{figure}

The entire solution procedure was performed on an Intel 3.10 GHz processor having 6.00 GB memory. The total computation time for $T=30$ was 156.31 seconds, where the PURGE, DOMINANCE, and APPROXIMATION operations accounted for 4.78\%, 11.86\%, and 83.45\%, respectively. As at  least a witness point was associated with each $\gamma$-vector in the concave approximation MIP \eqref{ApproximationMIP}, the sizes of MIPs in the APPROXIMATION step are significantly larger than those of the MIPs in the PURGE step and DOMINANCE step (could be 10$\sim$20 times larger). We remark that the dynamic programming approach for POSGs in Hansen et al.(2004) quickly ran out of memory after horizon $T=4$ for a small two-agent problem (the sizes of state, observation, and action spaces for each agent are all two). 

To illustrate our results and support the validation of the solution procedure and computational results, we consider the following three scenarios:
\begin{enumerate}[(i)]
	\item Scenario 1: The policy for the leader is constructed according to Algorithm \ref{overall}, and the follower selects the action that minimizes the leader's criterion value;
	\item Scenario 2: The policy for the leader is constructed according to Algorithm \ref{overall}, and the follower randomly selects its action;
	\item Scenario 3: The leader protects the most vulnerable target (which is the target that generates the largest negative consequence if attacked), and the follower selects the action that minimizes the leader's criterion value.	
\end{enumerate}

We assume the measure of performance is the leader's total discounted reward for each sample path and present in Fig. \ref{BoxplotFig} the distribution of the performance measure for each scenario, based on 1000 simulations.  We note the following from Fig. \ref{BoxplotFig}:

\begin{enumerate}[(i)]
	\item Scenario 1 has a lower mean than Scenario 2, which illustrates that if the policy for the leader is constructed according to Algorithm \ref{overall} and the follower uses policies other than selecting actions to minimize the leader's criterion value (e.g., irrational), then the performance of the leader will only improve. That is, the proposed algorithm protects the leader against all possible decision making processes of the adversary. 
	\item Scenario 1 has a higher mean than Scenario 3, which illustrates that if the leader uses policies other than the policy constructed according to Algorithm \ref{overall} and the follower selects actions that minimize the leader's criterion, the performance of the leader will degrade. It also shows that constantly protecting the most vulnerable target is not always the best option to the leader in the presence of an intelligent adversary. This is because in a dynamic environment, the adversary will learn the leader's strategy and thus attacks other targets accordingly. It further illustrates the importance of developing dynamic defensive strategies and defensible systems for protecting against intelligent and adaptive adversaries.     
\end{enumerate}

\begin{figure}[h]
	\centering
	\includegraphics[width=0.65\textwidth]{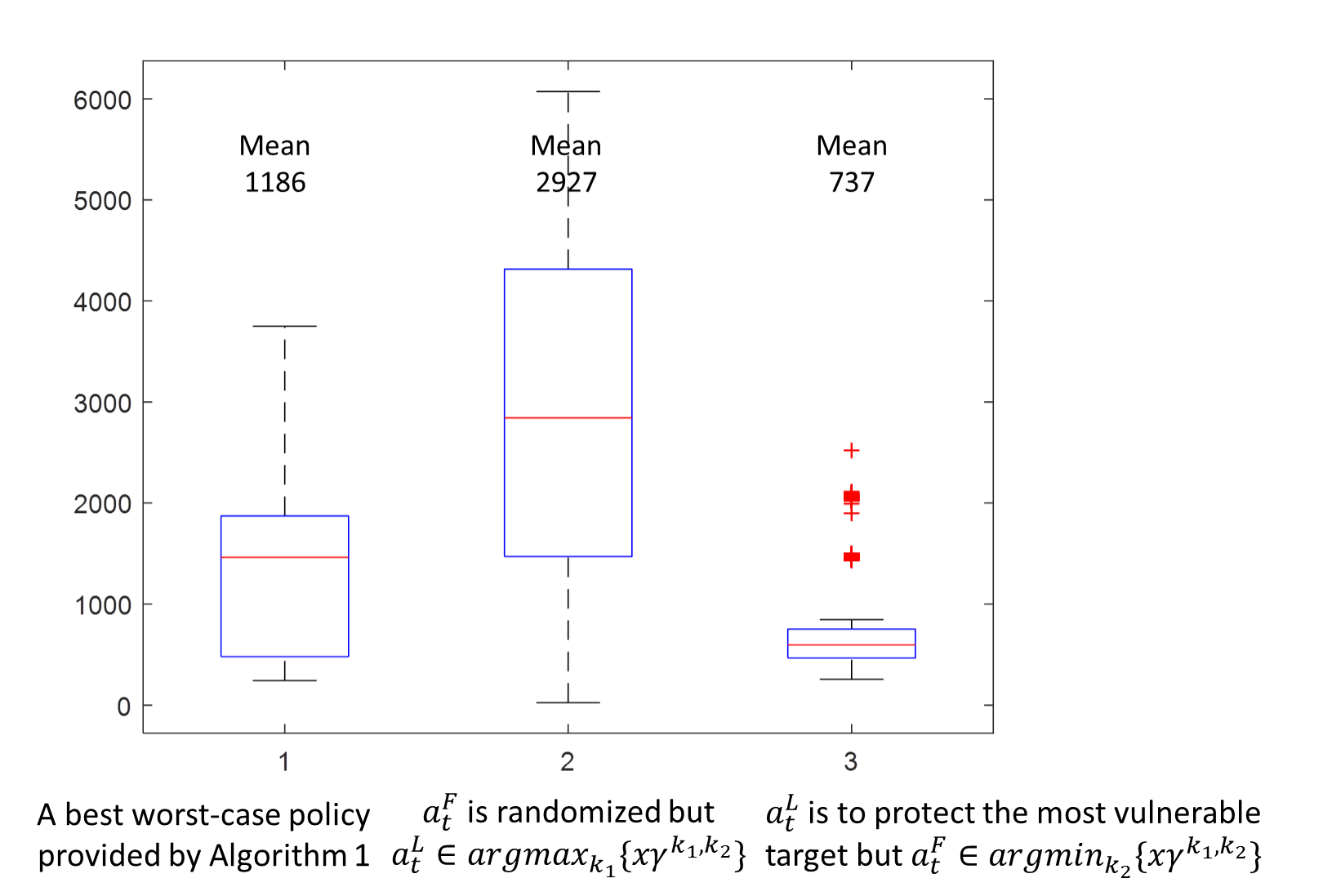}
	\caption{Comparisons among the proposed policy and two baseline policies.}
	\label{BoxplotFig}
\end{figure}
%Finally, we compare our results to the case where the leader can directly observe the follower's state. In this case, the leader's value function \eqref{POSGH} can be written as $[Hv](s)=\max_{a^L \in A^L}\min_{a^F}\{r^L(s,a)+\beta \sum_{s'}P(s'|s,a)v(s') \}$, which is a Markov decision process (MDP) with the ``maxmin" criterion and can be solved by standard MDP techniques. Given an initial distribution (e.g., uniform) over the system state $s=(s^L,s^F)$, our numerical results show that the leader's value function can be improved by $14$\%, illustrating the value of improved information about the follower.    
\section{Conclusions}
\label{Conclusions}
This paper introduced a worst-case analysis to a general-sum partially observable stochastic game with two non-cooperative agents, a leader and a follower, where the leader has little knowledge of the follower. A general-sum partially observable stochastic game can be transformed to a simpler single-agent problem under the worst-case scenario; however, it cannot be transformed to a standard POMDP. While the worst-case modeling can be viewed as a POMDP with imprecise parameters, there is no as yet established algorithms for these problems in the literature. In order to determine a baseline performance for the leader, we showed the optimal value function of the leader can be non-convex and developed a recursive algorithm to construct a lower bound of the leader's value function in the finite horizon and its associated policy. We further analyzed the quality of the lower bound and showed that the proposed procedure can be used to approximately evaluate the leader's performance in the infinite horizon case. The use of the model and the solution procedure was illustrated by a liquid egg production example in a security context.    

The lower bound was constructed by the sets $\tilde{\Gamma}_{t}(s^L) \subset \Gamma_{t}(s^L), \forall s^L \in S^L$. Future research should further improve the lower bound by efficiently searching $\gamma \notin \Gamma_{t}(s^L)$. Another research direction would be a detailed discussion for the infinite planning horizon problem. Moreover, the developed analysis provided a benchmark result for an agent in a dynamic, multi-agent partially observable stochastic environment. Thus, the follow-up research may include investigating the value of improved understanding of the adversarial behaviors by comparing with these benchmark results.

% Appendix here
% Options are (1) APPENDIX (with or without general title) or
%             (2) APPENDICES (if it has more than one unrelated sections)
% Outcomment the appropriate case if necessary
%
% \begin{APPENDIX}{<Title of the Appendix>}
% \end{APPENDIX}
%
%   or
%
% \begin{APPENDICES}
% \section{<Title of Section A>}
% \section{<Title of Section B>}
% etc
% \end{APPENDICES}

% Acknowledgments here
%\ACKNOWLEDGMENT{The authors gratefully acknowledge the existence of
%the Journal of Irreproducible Results and the support of the Society
%for the Preservation of Inane Research.}

% References here (outcomment the appropriate case)

% CASE 1: BiBTeX used to constantly update the references
%   (while the paper is being written).
%\bibliographystyle{informs2014} % outcomment this and next line in Case 1
%\bibliography{<your bib file(s)>} % if more than one, comma separated

% CASE 2: BiBTeX used to generate mypaper.bbl (to be further fine tuned)
%\input{mypaper.bbl} % outcomment this line in Case 2

%If you don't use BiBTex, you can manually itemize references as shown below.

\bibliographystyle{nonumber}

%%%%%%%%%%%%%%%%%
\end{document}